\author{Nikola Kamburov and Boyan Sirakov}
\address{Nikola Kamburov, Facultad de Matem\'aticas, Pontificia Universidad Cat\'olica de Chile, Avenida Vicu\~na Mackenna 4860, Santiago 7820436, Chile}
\email{nikamburov@mat.uc.cl}
\address{Boyan Sirakov, Departamento de Matematica, PUC-Rio, Rua Marques de Sao Vicente 225, G\'avea, Rio de Janeiro -- CEP
22451-900, Brazil}
\email{bsirakov@puc-rio.br}
\thanks{NK was partially supported by Proyecto Fondecyt Regular No.\ 1201087. BS was partially supported by CNPq grant 310989/2018-3 and FAPERJ grant E-26/203.015/2017.}
\title[Uniform a priori estimates for the Lane--Emden system in the plane]{Uniform a priori estimates for positive solutions of the Lane--Emden system in the plane}
\newtheorem{theorem}{Theorem}[section]
\newtheorem{prop}[theorem]{Proposition}
\newtheorem{lemma}[theorem]{Lemma}
\newtheorem{remark}[theorem]{Remark}
\numberwithin{equation}{section}
\def\R{\mathbb{R}}
\def\D{\Delta}
\def\O{\Omega}
\def\Op{{\Omega^\prime}}
\def\d{\delta}
\def\l{\lambda}
\def\de{\partial}
\def\k{\kappa}
\begin{document}

\begin{abstract} We prove that positive solutions of the superlinear Lane-Emden system in a two-dimensional smooth bounded domain are bounded  independently of the exponents in the system, provided the exponents are comparable. As a consequence, the energy of the solutions is uniformly bounded. In addition, the boundedness may fail if the exponents are not comparable.
\end{abstract}

\maketitle
\bibliographystyle{plain}

\section{Introduction}
We study positive classical solutions $(u,v)\in  [C^{2}(\Omega)\cap C(\overline{\O})]^2$ of the Dirichlet problem for the celebrated Lane-Emden system
\begin{equation}\label{LEsys}
\begin{cases}
-\D u  = v^p, \quad u>0 & \text{in} \quad \O, \\
-\D v = u^q, \quad v>0 & \text{in} \quad \O, \\
u  = v = 0   & \text{on} \quad \de \O,
\end{cases}
\end{equation}
in a two-dimensional bounded domain $\O\subset \R^n$, $n=2$, with a $C^2$-smooth boundary. We deal with superlinear systems, in the sense that $p\ge1$, $q\ge1$, $pq>1$.

The Lane-Emden system is widely seen as the simplest example of coupled nonlinear elliptic PDEs. As such, it has been the object of a huge number of theoretical studies -- we will not attempt an exhaustive bibliography, referring instead to the  surveys \cite{deFig_oldsurvey}, \cite{deFig_newsurvey}, and the book \cite[Chapter 31]{quittner2019superlinear}, where large lists of references can be found, as well as a lot of information about the available results on existence, non-existence, and qualitative properties of solutions of this system and its generalizations.

As far as the solvability of the Lane-Emden system is concerned, it has been known since the founding papers \cite{ClemDeFigMit}, \cite{HulVor}, \cite{Mit} (see also \cite{DeFigORuf} for stronger results in two dimensions) that solutions exist in a $n$-dimensional smooth bounded domain, provided $(p,q)$ is below the so-called critical hyperbola, that is,
\begin{equation}\label{crithyp}
\frac{1}{p+1} +\frac{1}{q+1}>\frac{n-2}{n},
\end{equation}
while no solutions exist for domains with sufficiently simple geometry, such as star-shaped domains, when \eqref{crithyp} is violated. It is then clearly of interest to understand how solutions $(u_{p,q},v_{p,q})$ behave when the point $(p,q)$ approaches the critical hyperbola from below. For $n\ge3$ this question was studied in detail in \cite{Guerra} and \cite{ChoiKim}, where it was shown that, among other things, solutions must blow up close to the hyperbola.

The two-dimensional case is special, since the hyperbola goes to infinity as $n\to2$ in \eqref{crithyp}, and for $n=2$ solutions exist for $(p,q)$ in the whole quarter-space $p\ge1$, $q\ge1$, $pq>1$. The corresponding asymptotic regimes one needs to study then are the cases when at least one of $p,q$ tends to infinity.

Here we address the fundamental question of {\it uniform boundedness} of solutions. For any {\it fixed} $(p,q)$ under the critical hyperbola the solutions of \eqref{LEsys}  are bounded by a constant which depends on $(p,q)$. This is very well known and implies existence via fixed-point methods and degree theory -- see \cite[Theorem 31.2]{quittner2019superlinear}, as well as \cite{MitPoh}, \cite{deFig_newsurvey},  \cite{QuitSoup}, \cite{DeFigSir}, and the references therein to various developments.

What we study here is whether solutions are bounded {\it independently of} $p,q$. This cannot be the case for $n\ge3$ when $(p,q)$ is close to the critical hyperbola, or else a limiting procedure would yield the existence of a solution for a point $(p,q)$ on the hyperbola. On the other hand, for $n=2$  there is no such argument, and it turns out that the question is quite challenging.

While obviously important in itself, our research was triggered by a recent preprint  of Z. Chen, H. Li and W. Zou \cite{ChenLiZou}, in which they obtain a rather complete description of the  behavior of solutions of \eqref{LEsys} in a smooth bounded domain of the plane, in the asymptotic regime
\begin{equation}\label{chinregime}
 p\to \infty, \qquad |p-q|\le C_0,
 \end{equation}
for a given absolute constant $C_0$. To prove their results, they assume
 an additional integral bound: for a constant $C$ independent of $u,v,p,q$,
\begin{equation}\label{intgCond}
p \int_{\O} \nabla u\cdot \nabla v \, dx \leq C \qquad \text{for all large } p.
\end{equation}
It is shown in \cite{ChenLiZou} that this condition is valid for the least energy solutions of the system, and thus the asymptotic behavior under \eqref{chinregime} of these solutions is deduced.

As a corollary to our main result  (Theorem \ref{main} below) on the uniform boundedness of solutions, we will show that  \eqref{intgCond} can be completely removed in \cite{ChenLiZou}, and hence the asymptotic analysis there is valid for arbitrary positive solutions,  in  star-shaped domains. Actually  \eqref{intgCond} is true for arbitrary solutions of \eqref{LEsys}, provided that $p\sim q$ at infinity and the domain $\Omega$ is star-shaped -- see Theorem \ref{coro_IntgBnd} below.

It is worth noting that the corresponding  asymptotic analysis as $p\to\infty$ for the scalar Lane-Emden equation
\begin{equation}\label{LE}
\left\{\begin{array}{lcl}
-\D u  = u^p, \quad u>0 & \text{in} & \O, \\
u  = 0  &\text{on}& \de \O,
\end{array}
\right.
\end{equation}
to which \eqref{LEsys} reduces for $p=q$, has had a long history but was completed only recently. We refer to \cite{rey1990role}, \cite{han1991asymptotic}, \cite{ren1994two}, \cite{adigrossi}, \cite{Italy_Asymp}, \cite{Italy_Quant}, \cite{Thizy}, \cite{Italy_Survey} for  a very complete picture of the blow-up profiles of the solutions of \eqref{LE}, when $p\to\infty$. The studies in two dimensions depended on the integral condition \eqref{intgCond} with $u=v$, $p=q$; we contributed to that study in \cite{KS2018}, where we proved that positive solutions of the Lane-Emden equation \eqref{LE} are uniformly bounded as $p\to \infty$, so the integral condition is always satisfied for such solutions, in star-shaped domains. In the recent paper \cite{Italy_Uniq} the results from \cite{lin1994uniqueness}, \cite{Italy_Asymp} and \cite{KS2018} were used to prove the uniqueness of positive solutions of the scalar Lane-Emden equation in a convex domain, for sufficiently large values of $p$.

We now give our main result on the Lane-Emden system.

\begin{theorem}\label{main} Let $\Omega\subset\R^2$ be a bounded domain with $C^2$-boundary. Suppose that
\begin{eqnarray}
& p\ge1, \quad q\ge1,\quad pq-1 \geq \k, \label{eq:expocond1} \\
& p \leq K q \quad(\mbox{resp.\ } q\le Kp),\label{eq:expocond2}
\end{eqnarray}
for some  constants $\k >0$ and $K\geq 1$. Then there exists a constant $C>0$, depending only on $\k$, $K$ and $\Omega$, such that  the component $u=u_{p,q}$ (resp.\ $v=v_{p,q}$) of any classical solution of \eqref{LEsys} satisfies
\[
\|u\|_{L^{\infty}(\Omega)} \leq C \quad(\mbox{resp.\ } \|v\|_{L^{\infty}(\Omega)}\leq C ).
\]
In particular, if
\begin{equation}\label{eq:pqcomparable}
\frac{1}{K}q \leq p \leq K q,
\end{equation}
then both solution components $u=u_{p,q}$ and $v=v_{p,q}$ are uniformly bounded by $C$.
\end{theorem}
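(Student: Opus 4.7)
The plan is a blow-up proof by contradiction, extending the scalar analysis of \cite{KS2018} to the coupled system. Assume the bound for $u$ fails: there exist exponents $(p_k,q_k)$ satisfying \eqref{eq:expocond1}--\eqref{eq:expocond2} and classical solutions $(u_k,v_k)$ with $M_k := \|u_k\|_{L^\infty(\O)} \to \infty$. Let $x_k$ be a maximizer of $u_k$, and set $N_k := v_k(x_k)$. Passing to a subsequence, three regimes arise: (i) $(p_k,q_k)$ bounded, (ii) $p_k$ bounded with $q_k \to \infty$, and (iii) $p_k, q_k \to \infty$ with $p_k \asymp q_k$. The hypothesis $p_k \le K q_k$ forbids the asymmetric possibility that $p_k \to \infty$ while $q_k$ stays bounded; this is the structural reason why $u$ can be controlled while $v$ potentially cannot.

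In regime (i), the Gidas--Spruck rescaling produces a positive solution of a Lane--Emden system on $\R^2$ or on a half-space, ruled out by Liouville theorems of Mitidieri--Pohozaev / Souplet type and the moving planes method for cooperative systems (de Figueiredo--Sirakov). In the genuinely novel regimes (ii)--(iii), I would select $r_k > 0$ with $r_k^{2}\, q_k M_k^{q_k - 1} \asymp 1$ and consider the exponential ansatz
\[
w_k(y) = q_k\Bigl(\tfrac{u_k(x_k + r_k y)}{M_k} - 1\Bigr),\qquad z_k(y) = p_k\Bigl(\tfrac{v_k(x_k + r_k y)}{N_k} - 1\Bigr);
\]
using $(1 + s/m)^m \to e^s$ together with $p_k \asymp q_k$ (in (iii)) or $p_k$ bounded (in (ii)), one expects $(w_k, z_k)$ to converge locally uniformly to a bounded-above, finite-mass solution of a Liouville-type limit system on $\R^2$ of the form $-\Delta w = \m\, e^z$, $-\Delta z = e^w$, with $w(0) = 0$, $w \le 0$, and $\m \in (0,K]$. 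The finite-mass condition $\int e^w, \int e^z < \infty$ would follow from the integral identity $\int u^{q+1} = \int v^{p+1} = \int \nabla u\cdot\nabla v$ combined with a Pohozaev identity for \eqref{LEsys}, which jointly furnish $L^1$-control on $u_k^{q_k}$ and $v_k^{p_k}$. Before the limit can be taken one must exclude boundary blow-up---$x_k$ should remain at uniform positive distance from $\partial\O$---which follows from the moving planes method for cooperative systems together with the $C^2$-smoothness of $\partial\O$. Classification of the Liouville-type limit into an explicit family of spherical bubbles then yields a mass quantization that contradicts the global Pohozaev balance on $\O$.

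The principal obstacle, in my view, is choosing the rescaling so that \emph{both} equations survive the limit simultaneously: $p_k$ and $q_k$ may diverge at very different rates and $M_k$, $N_k$ are a priori uncoupled, so a naive scaling stabilizing one equation typically trivializes or catastrophically degenerates the other. The comparability $p \le Kq$ is exactly what forces the two exponential ansatze $w_k$ and $z_k$ to live on the same spatial scale and produce a nondegenerate limit system. A secondary difficulty is the classification of the coupled Liouville-type limit on $\R^2$, which is considerably more delicate than its scalar counterpart and has no direct analog in \cite{KS2018}.
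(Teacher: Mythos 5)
Your proposal takes a genuinely different route from the paper and, as sketched, has several gaps that are precisely the core difficulties the paper's argument is designed to bypass. The paper does \emph{not} argue by blow-up and compactness; it gives a direct quantitative proof: (a) an eigenfunction-multiplier estimate gives $\int_\O u^q\,dx\le C$ and $\int_\O v^p\,dx\le C$ with $C=C(\k,\O)$ (Proposition~\ref{prop:int}); (b) a barrier comparison gives $N\le c_0M^q$ and $M\le c_0N^p$ (Lemma~\ref{lem:relativesizes}); (c) the inhomogeneous Harnack inequality in balls of carefully chosen radii $R_1=(M/(qN^p))^{1/2}$, $R_2=(N/(pM^q))^{1/2}$ shows $u^q\ge e^{-1/3}M^q$ on $B_{\hat C R_1}(x_u)$ and similarly for $v^p$; (d) Green's representation for $u(x_v)$ and $v(x_u)$ then yields two coupled inequalities $\frac{(M+C')M^q}{N^{p+1}}\ge\frac{C}{p}\log(cpM^q/N)$ and its mirror; (e) multiplying these and using the crude bounds $N\le CM^{q/(p+1)}$, $M\le CN^{p/(q+1)}$ closes the argument with $4\ge c\log M\log N$. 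No limit problem, no Liouville classification.

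Concrete gaps in your proposal. First, the nondegeneracy of your rescaling is exactly the unresolved difficulty. With $r_k^2 q_k M_k^{q_k-1}\asymp 1$, the first equation rescales to $-\D w_k = (q_k r_k^2 N_k^{p_k}/M_k)(1+z_k/p_k)^{p_k}$, and the coefficient equals $c\,N_k^{p_k}/M_k^{q_k}$; you need this to stay in a compact subinterval of $(0,\infty)$, but Lemma~\ref{lem:relativesizes} only gives $N^p/M^q\ge c_0^{-1}M^{1-q}$, which degenerates as $M\to\infty$. Establishing that $N^{p}/M^{q}$ is bounded above and below is essentially equivalent to the whole theorem, so you cannot take it for granted when setting up the blow-up. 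Second, you set $N_k:=v_k(x_k)$ at the maximum point of $u_k$, but the maximum of $v_k$ may be attained elsewhere. Then $z_k$ is \emph{not} bounded above (only $w_k\le 0$ is), so the limit system $-\D w=\m e^z$ need not have finite mass, and the very existence of a nontrivial entire limit is in doubt. This ``different maximum points'' issue is singled out by the paper as a central obstacle, and your ansatz does not address it. Third, even granting a nondegenerate limit, the classification of entire finite-mass solutions of the coupled Liouville-type system $-\D w=\m e^z$, $-\D z=e^w$ on $\R^2$ (with $\m\ne 1$) and the ensuing quantization argument are not available off the shelf; you flag this as a ``secondary difficulty'', but without it the contradiction never materializes. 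Fourth, invoking the Pohozaev identity to get finite mass implicitly restricts to star-shaped domains, whereas Theorem~\ref{main} is for arbitrary $C^2$ domains; the $L^1$ bound on $u_k^{q_k}$ and $v_k^{p_k}$ actually comes from the eigenfunction multiplier plus Jensen's inequality (Proposition~\ref{prop:int}), not from Pohozaev. Finally, a cautionary data point: Theorem~\ref{biheq} shows that for $p=1$, $q\to\infty$, $u$ stays bounded while $\max v\sim\log q$; the two components \emph{do} live on genuinely different scales when $p\not\asymp q$, so the assertion that the hypothesis $p\le Kq$ ``forces the two ansätze to live on the same spatial scale'' is exactly the quantitative statement that requires proof, not a structural consequence.
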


Hypothesis \eqref{eq:expocond1} is a superlinearity assumption -- if $p=q=1$ we have an eigenvalue problem, whose solutions are not bounded, when they exist. The really important restriction in Theorem \ref{main} is \eqref{eq:expocond2}, resp.\ \eqref{eq:pqcomparable}. It is largely sufficient for the main application we have in mind -- the bound \eqref{intgCond} under \eqref{chinregime} and the resulting asymptotic analisys in \cite{ChenLiZou}.  Note that \eqref{chinregime} implies \eqref{eq:pqcomparable} for any $K>1$.

At first glance one may think \eqref{eq:pqcomparable} is technical. Our next theorem shows that this is not so, for in the ``extreme" case $p=1$ (when the Lane-Emden system becomes the Navier problem for the biharmonic Lane-Emden equation $(-\Delta)^2u=u^q$) in a disk the $v$-component grows logarithmically as $q\to \infty$.  See also the graphs in Figures \ref{fig:image1} and \ref{fig:image2} at the end of Section~\ref{secbiham}.

\begin{theorem}\label{biheq} Let $\Omega\subset\R^2$ be the unit disk, $p=1$, and assume $q\geq 1+\k$ for some $\k>0$. Let $(u,v)=(u_q,v_q)$ be a solution of \eqref{LEsys}. Then there is a constant  $C\geq 1$ depending only on $\k$ such that
\begin{equation}\label{ubd}
\|u_q\|_{L^{\infty}(\Omega)} \leq C 
\end{equation}
while
\begin{equation}\label{vlog}
C^{-1} \log (q) \leq  \|v_q\|_{L^{\infty}(\Omega)}\leq C\,\log (q).
\end{equation}
\end{theorem}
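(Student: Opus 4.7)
The plan is to exploit radial symmetry in the disk and perform a matched asymptotic analysis of the radial ODE. By the moving-planes method applied to the cooperative Lane--Emden system (as in Troy's theorem), every positive solution $(u,v)$ in $B_1$ is radially symmetric and strictly decreasing, so I write $M:=u(0)$, $N:=v(0)$. The bound \eqref{ubd} follows directly from Theorem~\ref{main}: for $p=1$ and $q\ge 1+\kappa$ the hypothesis $p\le Kq$ holds with $K=1$, so $M\le C$ with $C=C(\kappa)$. Using the explicit Dirichlet Green's function $G(0,y)=-\frac{1}{2\pi}\log|y|$ of $-\Delta$ on $B_1$, the composition $(-\Delta)^{-2}$ yields the representations
\[
M=\int_0^1 r\,v(r)\log\tfrac{1}{r}\,dr,\qquad N=\int_0^1 r\,u(r)^q\log\tfrac{1}{r}\,dr,
\]
and iterating the two relations with $u^q \le M^q$ gives $M\le (3/64)\,M^q$, hence $M\ge(64/3)^{1/(q-1)}>1$ and $M-1\gtrsim 1/q$.

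For the upper bound $\|v_q\|_\infty\le C\log q$, I combine $u\le M$ with the near-origin Taylor bound $u(r)\le M-Nr^2/8$ valid on the inner core $r\lesssim\sqrt{N/M^q}$ (from $u''(0)=-N/2$ together with $v(r)\ge N/2$ there, the latter coming from $v''(0)=-M^q/2$). This feeds the quasi-Gaussian pointwise bound $u(r)^q\le M^q\exp(-cqNr^2/M)$ in the core. The complementary bound $u(r)\le (N/4)(1-r^2)$, obtained from $-\Delta u=v\le N$ by comparison with the torsion of $B_1$, ensures $u\le 1$ on $r\ge\sqrt{1-4/N}$ and hence a bounded contribution from the outer region. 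Inserting into the integral formula for $N$, the dominant contribution comes from the inner core and the logarithmic factor $\log(1/r)\sim\log(qN)$ there produces the self-consistent inequality $N\le C\log(qN)$, which bootstraps to $N\le C'\log q$.

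For the lower bound $\|v_q\|_\infty\ge c\log q$, I use $M-1\gtrsim 1/q$ together with $u(r)\ge M-Nr^2/4$ to find a disk of radius $\rho\sim\sqrt{(M-1)/N}$ on which $u\ge 1+c/q$ and hence $u^q\gtrsim 1$; the Green's identity $M=\int K(0,z)u^q\,dz\le K(0,0)\int u^q$, with $K(x,z)=\int G(x,y)G(y,z)\,dy$ and $K(0,0)=1/(8\pi)$, additionally gives $\int_{B_1}u^q\ge 8\pi M\ge 8\pi$, with the bulk of this mass localized in the inner core since $u\le 1$ outside. The Newtonian-potential structure of $N=\int G(0,z)u^q\,dz$ then delivers $N\gtrsim\log q$. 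The main technical obstacle is closing the \emph{sharp} logarithmic lower bound rather than the weaker rate $\sqrt{\log q/q}$ given by a naive application of the ingredients above: this requires bootstrapping $M-1\gtrsim 1/q$ to its sharp form $M-1\gtrsim(\log q)/q$, which in turn demands a self-consistent matched inner/outer asymptotic description of $(u,v)$ exploiting both integral identities for $M$ and $N$ simultaneously.
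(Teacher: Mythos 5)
Your derivation of \eqref{ubd} from Theorem~\ref{main} (with $K=1$) is exactly what the paper does. Your iterated Green's-function bound $M\leq M^q\bigl(\int_0^1 r\log\tfrac1r\,dr\bigr)^2=M^q/16$ (your constant $3/64$ is slightly off, but the conclusion is the same) gives $M^{q-1}\geq 16>1$, hence $M>1$ and $M-1\gtrsim 1/q$; this is the same idea as the paper's \eqref{prop:bb:Mlow}, just implemented with the iterated Green kernel instead of the first eigenfunction. Up to here you are on solid ground.

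The two estimates on $\|v_q\|_\infty$, however, both have genuine gaps. For the upper bound, you propose to insert a ``quasi-Gaussian'' pointwise bound $u^q\lesssim M^q e^{-cqNr^2/M}$ into $N=\int_0^1 r\,u^q\log\tfrac1r\,dr$, claiming the logarithmic factor on the core equals $\log(qN)$ and that this closes to $N\lesssim\log(qN)$. But the core radius you set up is $r\sim\sqrt{N/M^q}$, so the relevant logarithmic weight is $\tfrac12\log(M^q/N)$, not $\log(qN)$; relating the former to $\log q$ requires an a priori bound of the form $M^q\lesssim q^C$, which your sketch does not supply. The paper establishes precisely such a bound ($M^q\leq Cq^2$, equation \eqref{prop:bb:Mq}) via a Harnack argument, and then avoids the integral-formula computation altogether, using instead the Brezis--Merle exponential $L^1$ bound (Lemma~\ref{lem:BM}) together with the lower bound $v\geq 3N/4$ on a ball of radius $\sim\sqrt{N/M^q}$, yielding $e^{\tilde c N}/M^q\lesssim 1$ and hence $N\lesssim\log q$. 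Without $M^q\lesssim q^2$ your bootstrap cannot start, and even with it the contribution from the outer region (between the core $r\lesssim\sqrt{N/M^q}$ and your boundary annulus $r\geq\sqrt{1-4/N}$) is not controlled by what you have written.

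For the lower bound, you acknowledge the gap yourself, but the proposed repair is in the wrong direction. You suggest bootstrapping to $M-1\gtrsim(\log q)/q$; the paper never proves this, and it is not needed. In fact the paper only controls $M$ to the accuracy $1-\bar c/q\leq M\leq 1+4\log q/q$, and the naive Green-function estimate from a disk of radius $\rho\sim\sqrt{(M-1)/N}$, where $u^q\gtrsim 1$, gives only $N\gtrsim\log(1/\rho)\cdot\rho^2\sim\log(qN)/(qN)$, which is vacuous. What actually closes the argument in the paper is a different mechanism: (i) a \emph{uniform} lower bound $\int_\Omega u^q\geq c>0$ (Proposition~\ref{prop:intbdsbelow}, via the global Harnack inequality of \cite{sirakov2022global}), (ii) mass concentration of $u^q$ on the set $S=\{u\geq 1-L/q\}$ (Step~1), (iii) a quadratic-decrease estimate $M-u\geq N|x|^2/4-CM^{q/2}\sqrt{I(R)}\,|x|^3$ obtained from the inhomogeneous Harnack inequality applied to $v$ (Step~2, Lemma~\ref{lem:inhHarnack2}), which --- combined with a dichotomy on $I(R)=\int_{B_{2R}}u^q$ with $R=AN/M^{q/2}$ --- forces $S\subset\overline{B_R}$, and finally (iv) the Green representation at the origin produces $N\gtrsim\log(1/R)\gtrsim\log q$ because $R\lesssim q^{-1/2}$ (Step~3--4). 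Your plan lacks (i)--(iii) entirely: you have a lower bound on $\int u^q$ via the iterated kernel, but no mechanism to confine that mass to a ball of radius $q^{-1/2}$, which is the crucial step. Replacing the $M-1$ bootstrap with the mass-concentration/quadratic-decrease argument is what you would need.
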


Both the result in Theorem \ref{biheq} and the difficulty of its proof are surprising. We consider this theorem a compelling piece of evidence of the richer nature and  greater complexity of the Lane-Emden system when compared to the scalar equation.

 We conjecture that if we assume only \eqref{eq:expocond1} then the maxima of the solution components are always bounded by $C(\k,\O)\log(pq)$, for any smooth bounded $\Omega$ (the proof of Theorem~\ref{biheq} implies this for $p=1$, see Theorem \ref{thm:v_blowup} below). Finding the optimal relation between $p$ and $q$ which guarantees that the solutions of the Lane-Emden system are bounded independently of $(p,q)$ as in Theorem~\ref{main} is an interesting and delicate  open problem.
\medskip

The proof of Theorem \ref{main} employs the method we introduced in \cite{KS2018} for the scalar case.  The main ingredients of that method are the Green's representation formula, the inhomogeneous Harnack inequality, the natural $L^p$-bound for $u$ which can be deduced from the equation, and a rescaling argument. The main idea of the method is to bound from below by a positive constant the term $u^p$ in the singular integral in the Green identity, written at a maximum point of $u$, by showing that $u$ remains close to its maximum on a sufficiently large (though small) ball around the point where the maximum is attained -- and we ensure this with the help of the Harnack inequality. Then the logarithmic singularity provides the desired bound.

It is curious that when applied to the scalar equation in \cite{KS2018}, the method feels somewhat ``overspecified", most saliently in that the rescaling provides an equation in a large ball, while the estimate of $u$ only happens in a small ball around the maximum; also, the $L^p$-estimate is used only to control the regular part of the Green function in the representation formula.

The system case is much more delicate because of the coupling of $u$ and $v$, the effect that the two different exponents $p$ and $q$ have on the natural scales associated to the two solution components,  and the fact that the latter may achieve their maxima at different points. For the proof of Theorem \ref{main} we need the full strength of the method from \cite{KS2018}: the Harnack inequality is applied in balls of a priori unknown size, only just adjusted to the size of the rescaled domain. The radii of the balls are precisely determined by the maximum principle and the resulting comparison between the maxima of the solution components. The Harnack inequality is applied not only in the Green formula but also in the $L^p$ (resp.\ $L^q$) bounds for the solutions, and only the combined strength of the resulting estimates permits us to uncouple the relations between the maxima of the components.\medskip

The proof of Theorem \ref{biheq} is even more complex than that of Theorem \ref{main}, and uses heavier tools. Specifically, we rely on the recent global Harnack inequality from \cite{sirakov2022global} (see below), on the Brezis-Merle exponential bound for the Dirichlet problem from \cite{brezis1991uniform}, and the inhomogeneous Harnack inequality with an unbounded right-hand side. We prove that the $L^1$ norms of $u$, $u^{q+1}$, $v$, and $v^{p+1}$ stay uniformly bounded away from zero (in contrast to what happens for comparable $p$ and $q$),  then that the solutions decrease quadratically  away from their maxima, and  combine these facts with delicate evaluations of the measures of the superlevel sets of the components. We note that the proof of Theorem \ref{biheq} is entirely PDE-based; we use the fact that $u,v$ attain their maximum at the same point and decrease away from it, but we do not rely on the equivalent ODE formulation.\medskip

As we already noted, it is not our aim here to give a thorough account of the reasons for which studying systems is much more complicated and challenging than scalar equations -- we refer to the surveys and books cited above. One distinction we would like to mention concerns the variational formulation of the system, since it is important in the proof of our last result. Specifically, when searching for (finite energy) solutions of \eqref{LEsys} as critical points of the functional
$$
J(u,v) = \frac{1}{2}\int_\Omega \nabla  u\cdot \nabla v\, dx  - \frac{1}{p+1}\int_\Omega (u^+)^{p+1} \, dx- \frac{1}{q+1}\int_\Omega (v^+)^{q+1}\, dx,
$$
we see that when $u=v$ the Dirichlet energy $\int_\Omega |\nabla u|^2$ is positive and coercive on the energy space $H^1_0(\Omega)$, while in general the term $\int_\Omega \nabla u \cdot \nabla v$ is strongly indefinite on $[H^1_0(\Omega)]^2$. This renders it impossible to derive the estimate \eqref{intgCond} from Theorem \ref{main} in the same way as the corresponding scalar estimate was derived in \cite{KS2018}, since there we used the Cauchy-Schwarz inequality to bound the Dirichlet energy on the boundary from below. Nevertheless, we prove  \eqref{intgCond} as follows.





\begin{theorem}\label{coro_IntgBnd} Let $\O\subset \R^2$ be a star-shaped bounded domain with $C^2$-boundary and assume that the exponents $p,q \geq 1$ satisfy \eqref{eq:expocond1} and \eqref{eq:pqcomparable} for some $\k>0$ and $K\geq 1$. There exists a constant $C$, depending only on $\k$, $K$ and $\Omega$, such that  any classical solution $(u,v)=(u_{p,q}, v_{p,q})$ of \eqref{LEsys} satisfies
\[
p \int_{\O} \nabla u \cdot \nabla v \, dx = p \int_{\O} v^{p+1} =  p \int_{\O} u^{q+1} \, dx \leq C.
\]
\end{theorem}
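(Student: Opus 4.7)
The plan is to combine the Pohozaev--Mitidieri identity for the Lane--Emden system with the uniform $L^\infty$-bound provided by Theorem~\ref{main}, in the spirit of the scalar-case technique of \cite{KS2018}. The two equalities in the statement are routine: testing the equation $-\Delta u=v^p$ against $v$ and the equation $-\Delta v=u^q$ against $u$, and using $u=v=0$ on $\partial\Omega$, gives
\[
\int_\Omega\nabla u\cdot\nabla v\,dx=\int_\Omega v^{p+1}\,dx=\int_\Omega u^{q+1}\,dx.
\]
Denote this common value by $E$; it remains to prove $pE\le C$. By Theorem~\ref{main}, hypotheses \eqref{eq:expocond1} and \eqref{eq:pqcomparable} yield $\|u\|_\infty,\|v\|_\infty\le C_0$ for some $C_0=C_0(\kappa,K,\Omega)$, so in particular $E\le C_0\int_\Omega v^p\,dx$ and $E\le C_0\int_\Omega u^q\,dx$.

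Next, translating if necessary, I may assume that $\Omega$ is star-shaped with respect to the origin, giving $x\cdot\nu\ge c_0>0$ on $\partial\Omega$. The Pohozaev--Mitidieri identity for the system in dimension $n=2$ reads
\[
2E\Bigl(\tfrac{1}{p+1}+\tfrac{1}{q+1}\Bigr) \;=\; \int_{\partial\Omega}(x\cdot\nu)\,\partial_\nu u\,\partial_\nu v\,d\sigma,
\]
and under the comparability \eqref{eq:pqcomparable}, the left-hand side is comparable to $E/(p+1)$; it therefore suffices to bound the boundary integral above by $C/(p+1)^2$.

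To this end, write $A:=\int_{\partial\Omega}(x\cdot\nu)(\partial_\nu u)^2\,d\sigma$ and $B:=\int_{\partial\Omega}(x\cdot\nu)(\partial_\nu v)^2\,d\sigma$. Cauchy--Schwarz on $\partial\Omega$ gives $\int_{\partial\Omega}(x\cdot\nu)\partial_\nu u\,\partial_\nu v \le \sqrt{AB}$. On the other hand, the Cauchy--Schwarz estimate $(\int_{\partial\Omega}|\partial_\nu u|)^2\le|\partial\Omega|\int_{\partial\Omega}(\partial_\nu u)^2$ together with $c_0\le x\cdot\nu\le R$ on $\partial\Omega$ and the divergence identities
\[
\int_\Omega v^p\,dx = \int_{\partial\Omega}|\partial_\nu u|\,d\sigma, \qquad \int_\Omega u^q\,dx = \int_{\partial\Omega}|\partial_\nu v|\,d\sigma,
\]
yield $(\int_\Omega v^p)^2 \le C_1 A$ and $(\int_\Omega u^q)^2 \le C_1 B$. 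Chained with the two inequalities $E\le C_0\int v^p$ and $E\le C_0\int u^q$, this produces $E^2 \le C\sqrt{AB}$. Upper-bounding $A$ and $B$ through the scalar Pohozaev identities applied to each component of \eqref{LEsys} in dimension two, namely $A=-2\int_\Omega v^p(x\cdot\nabla u)\,dx$ and $B=-2\int_\Omega u^q(x\cdot\nabla v)\,dx$, and using the energy identities $\int_\Omega|\nabla u|^2\,dx=\int_\Omega uv^p\,dx\le C_0\int_\Omega v^p\,dx$ together with their counterparts for $v$, one closes a self-referential estimate that forces $pE\le C$.

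The main obstacle is that in the scalar case of \cite{KS2018} the boundary integrand is the perfect square $(\partial_\nu u)^2$, which Cauchy--Schwarz converts directly into $(\int|\partial_\nu u|)^2$; combined with the divergence identity $\int|\partial_\nu u|=\int u^p$ and with $\int u^{p+1}\le C_0\int u^p$, this closes the argument in a single step. Here the boundary integrand is the mixed product $\partial_\nu u\,\partial_\nu v$, and the converse Cauchy--Schwarz inequality $\int_{\partial\Omega}|\partial_\nu u||\partial_\nu v|\gtrsim\int_{\partial\Omega}|\partial_\nu u|\cdot\int_{\partial\Omega}|\partial_\nu v|$ fails in general (as one sees by placing the two densities on disjoint arcs of $\partial\Omega$). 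The delicate point is therefore to decouple the estimate via the two scalar Pohozaev identities for $u$ and $v$ separately, while scrupulously avoiding any multiplicative loss of $\|u\|_\infty^q$ or $\|v\|_\infty^p$---quantities that grow exponentially in $p,q$ and would be fatal---by working throughout with the mixed energy identities $\int|\nabla u|^2=\int uv^p$ and $\int|\nabla v|^2=\int vu^q$ rather than with the power functions $v^p, u^q$ directly.
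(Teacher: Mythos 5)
Your setup is right---Pohozaev plus the uniform $L^\infty$ bound of Theorem~\ref{main}, with the mixed boundary integrand correctly identified as the crux---but the proposal goes in the wrong direction at the decisive step and never actually closes.

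The Pohozaev identity reads $\bigl(\tfrac{2}{p+1}+\tfrac{2}{q+1}\bigr)E = \int_{\partial\Omega}(x\cdot\nu)u_\nu v_\nu$, whose left-hand side is already of size $E/p$. To get $pE\le C$ from this you need a \emph{lower} bound on the boundary integral that produces an extra small factor, not an upper bound. Your Cauchy--Schwarz step $\int_{\partial\Omega}(x\cdot\nu)u_\nu v_\nu\le\sqrt{AB}$ is an upper bound and yields only $E/p\lesssim\sqrt{AB}$; combined with the reverse inequalities $E^2\lesssim A$ and $E^2\lesssim B$ that you derive, this gives $E/p\lesssim\sqrt{AB}$ and $E^2\lesssim\sqrt{AB}$, which is consistent with $E$ being of order one and tells you nothing. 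The closing step---``upper-bounding $A$ and $B$ through the scalar Pohozaev identities \dots one closes a self-referential estimate''---is not carried out, and the natural attempts fail for exactly the reason you flag yourself: estimating $A=-2\int_\Omega v^p(x\cdot\nabla u)\,dx$ by Cauchy--Schwarz forces you to control $\|v^p\|_{L^2}$, hence $\|v\|_\infty^p$, which grows like $C_0^{\,p}$ and is fatal.

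The paper resolves the mixed-product obstacle differently: it bounds the boundary integral from \emph{below}. The key new input is the global Harnack inequality of \cite{sirakov2022global} (Theorem~\ref{thm:GHI}), which gives a \emph{pointwise} lower bound on the normal derivatives at every boundary point, $-u_\nu(x)\geq C\|u\|_{L^1(\Omega)}$ and $-v_\nu(x)\geq C\|v\|_{L^1(\Omega)}$ for all $x\in\partial\Omega$. This sidesteps the converse-Cauchy--Schwarz issue entirely, because one is bounding each factor of the product pointwise rather than trying to compare an integral of a product with a product of integrals. Plugging these bounds in and using $\int_{\partial\Omega}(x\cdot\nu)\,ds=2|\Omega|$ gives $\int_{\partial\Omega}(x\cdot\nu)u_\nu v_\nu\geq c\,\|u\|_{L^1}\|v\|_{L^1}\geq c'\int_\Omega v^p\,dx\int_\Omega u^q\,dx$ (via Proposition~\ref{prop:int}), and equating with the upper bound $\frac{C}{p}\int_\Omega v^p\,dx$ from the left-hand side yields $p\int_\Omega u^q\,dx\leq C'$, hence $pE\leq p\|u\|_\infty\int_\Omega u^q\,dx\leq C$. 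Note also that because the argument needs only $x\cdot\nu\geq 0$ (it is folded into the identity $\int_{\partial\Omega}(x\cdot\nu)\,ds=2|\Omega|$), the paper's proof works for non-strictly star-shaped domains; your assumption $x\cdot\nu\geq c_0>0$ is unnecessarily restrictive.
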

The proof of Theorem \ref{coro_IntgBnd} is based on a Pohozaev identity, which is the reason behind the hypothesis on the domain being star-shaped. The new key ingredient of the proof is a recent \emph{global Harnack inequality} due to the second author (\cite[Theorem 1.3]{sirakov2022global}), which allows us to estimate from below the absolute normal derivatives of the solution components in terms of their $L^1$-norms.

We observe that Theorem \ref{coro_IntgBnd} is also an improvement to the result from \cite{KS2018} since we can consider a non-strictly star-shaped domain. That our proof of Theorem \ref{coro_IntgBnd} actually implies this fact was pointed out to us by Z.\ Chen.

Theorem \ref{coro_IntgBnd} has already been exploited in \cite{ChenLiZou_uniq}, to obtain sharp estimates on solutions of the Lane-Emden system \eqref{LEsys} in the asymptotic regime \eqref{chinregime}, and deduce the uniqueness of solutions in a convex domain under \eqref{chinregime} for sufficiently large $p$, an extension to systems of the result in \cite{Italy_Uniq}.

In the next section we prove Theorem \ref{main} and Theorem \ref{coro_IntgBnd}. The last section is devoted to the proof of Theorem \ref{biheq}.

\section{Proofs}

\subsection{Preliminaries}
In what follows the letters $C,c$ (possibly with indices and primes) will denote positive constants which depend only on $\k$, $K$ and $\Omega$, and which may change from line to line. For the sake of notational simplicity, we shall drop the subscripts $p,q$ from the solution components $u_{p,q}$, $v_{p,q}$ and their correponding features. Throughout the exposition we shall denote with
\begin{equation*}
M:= \max_{\overline{\O}} u, \qquad N:= \max_{\overline{\O}}v.
\end{equation*}
the maxima of $u$ and $v$, respectively. The ball of radius $r$, centered at $x\in \R^n$, is denoted by $B_r(x)$ and $B_r:=B_r(0)$.

We start with the following classical integral estimate, whose proof can be extracted, for instance, from \cite[Theorem 31.2]{quittner2019superlinear} and \cite{de1982priori}.

\begin{prop}\label{prop:int} Let $(u,v)$ be a classical solution of \eqref{LEsys} in a bounded $C^2$-domain $\O\subset \R^2$, and \eqref{eq:expocond1} holds.
There exist positive constants $\d$ and $c$ depending only on $\Omega$, and $C$ depending only on $\k$ and $\Omega$, such that
\begin{itemize}
\item The maxima of $u$ and $v$ in $\overline{\Omega}$ are attained in $\{x\in \Omega:\text{dist}(x,\de\Omega)> \d\}$;
\item We have the bounds
\begin{align}
\int_{\O}v^p \, dx  & \leq c \|u \phi\|_{L^1(\O)} \leq C \label{prop:int:eq1}, \\
\int_{\O}u^q \, dx  & \leq c  \|v \phi\|_{L^1(\O)}  \leq C, \label{prop:int:eq2}
\end{align}
where $\phi$ is the eigenfunction of the Dirichlet Laplacian, associated with the lowest eigenvalue $\lambda=\lambda(\O)$:
\begin{equation*}
-\D \phi = \lambda \phi, \quad \phi>0\mbox{ in } \Omega, \qquad \phi = 0 \text{ on } \de\Omega, \quad \text{normalized so that} \quad \|\phi\|_{L^1(\Omega)}=1.
\end{equation*}
\end{itemize}
\end{prop}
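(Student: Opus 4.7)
The plan is to address the two bullet points separately. For the first, the interior attainment of the maxima, I would apply the moving planes method for cooperative systems in its local form near $\partial\Omega$ (cf.\ de~Figueiredo 1982 and de~Figueiredo--Felmer). The $C^{2}$-regularity of $\partial\Omega$ allows, at every boundary point, a family of interior reflections across tangent hyperplanes translated slightly inward; the cooperativity of the system (monotonicity of $v^p$ in $v$ and $u^q$ in $u$) together with the small-domain maximum principle for weakly coupled systems propagates the inequality $u(x)\le u(x^\ast)$, $v(x)\le v(x^\ast)$ between a point $x$ near $\partial\Omega$ and its inward reflection $x^\ast$. The resulting monotonicity in the inward normal direction, throughout a collar of uniform width $\delta=\delta(\Omega)$, forces both maxima to lie at distance at least $\delta$ from $\partial\Omega$.

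For the weighted integral bounds, I would test each equation against the first eigenfunction $\phi$. Two integrations by parts using $u=v=\phi=0$ on $\partial\Omega$ annihilate the boundary terms and yield
\begin{equation*}
\int_{\Omega}v^{p}\phi\,dx=\lambda\int_{\Omega}u\phi\,dx,\qquad \int_{\Omega}u^{q}\phi\,dx=\lambda\int_{\Omega}v\phi\,dx.
\end{equation*}
Because $\phi\,dx$ is a probability measure, Jensen's inequality applied to $t\mapsto t^{p}$ and $t\mapsto t^{q}$ gives $(\int u\phi)^{q}\le\int u^{q}\phi$ and $(\int v\phi)^{p}\le\int v^{p}\phi$. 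Chaining these with the two identities above closes a superlinear loop
\begin{equation*}
\bigl(\|u\phi\|_{L^{1}(\Omega)}\bigr)^{pq-1}\le\lambda^{p+1}.
\end{equation*}
Since $p,q\ge 1$ with $pq-1\ge\kappa$, the exponent $(p+1)/(pq-1)$ is uniformly bounded by a constant depending only on $\kappa$ (one checks the two cases $p\ge 2$ and $p<2$ separately). Hence both $\|u\phi\|_{L^{1}}$ and, symmetrically, $\|v\phi\|_{L^{1}}$ are controlled by $C(\kappa,\Omega)$, which gives the rightmost inequalities in \eqref{prop:int:eq1}--\eqref{prop:int:eq2}.

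It remains to convert the weighted bound $\int v^{p}\phi\le C$ into the unweighted $\int v^{p}\le c\|u\phi\|_{L^{1}}$, and analogously for $\int u^{q}$. I would split $\Omega=\Omega_{\delta}\cup(\Omega\setminus\Omega_{\delta})$ with $\Omega_{\delta}:=\{d(\cdot,\partial\Omega)>\delta\}$. On $\Omega_{\delta}$ the continuity and positivity of $\phi$ give $\phi\ge c_{\delta}>0$, so $\int_{\Omega_{\delta}}v^{p}\le c_{\delta}^{-1}\int v^{p}\phi\le C\|u\phi\|_{L^{1}}$. On the boundary collar I would foliate by inward normals $x=y+t\nu(y)$, $y\in\partial\Omega$, $t\in(0,\delta)$; the monotonicity from the first bullet yields $v^{p}(y,t)\le v^{p}(y,t+\delta)$ pointwise for $t\in(0,\delta)$, and a change of variable with bounded Jacobian (the collar is a $C^{1}$ tubular neighborhood) dominates $\int_{\Omega\setminus\Omega_{\delta}}v^{p}$ by a multiple of $\int_{\{\delta<d<2\delta\}}v^{p}\le\int_{\Omega_{\delta/2}}v^{p}$, which falls under the interior case (up to relabeling $\delta$).

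The delicate step is this last transfer, as it is what ties the two bullets together: the uniform width $\delta$ from the first bullet is exactly what permits the weighted-to-unweighted passage to be uniform in $(p,q)$. The superlinearity hypothesis $pq-1\ge\kappa$ is also essential, since it produces the exponent closing the Jensen loop and keeps $(p+1)/(pq-1)$ bounded; without it (e.g.\ $p=q=1$) the method degenerates into the bare eigenvalue identity.
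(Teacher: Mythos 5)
Your overall architecture matches the paper's: moving planes near the boundary for the first bullet, testing each equation against the first eigenfunction and closing the loop with Jensen's inequality for the weighted bounds, and a collar estimate to pass from the weighted to the unweighted integral. The Jensen chain $\lambda a\ge b^p$, $\lambda b\ge a^q$ with $a=\|u\phi\|_{L^1}$, $b=\|v\phi\|_{L^1}$ is exactly the paper's, and your observation that $(p+1)/(pq-1)\le C(\kappa)$ is the right one; the paper phrases it as $(p+1)/(pq-1)=\frac{1/(pq)+1/q}{1-1/(pq)}\le 2/\bar\kappa$ with $\bar\kappa=\kappa/(1+\kappa)$, which avoids the case split.

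Where you genuinely diverge is the weighted-to-unweighted transfer. You foliate the boundary collar by inward normals and invoke pointwise monotonicity $v(y+tn(y))\le v(y+(t+\delta)n(y))$ plus a bounded-Jacobian change of variables to push the collar mass into a compactly contained annulus where $\phi$ is bounded below. The paper instead works with abstract measurable sets $I_y\subset\Omega_\delta$ of uniformly positive measure $\gamma$ such that $u(y)\le u(\xi)$, $v(y)\le v(\xi)$ for $\xi\in I_y$, and then integrates the pointwise bound $\int_\Omega u^q\phi\ge\gamma\,c_0\,u^q(y)$ over $y$ in the collar. The two give the same conclusion, but the paper's $I_y$ formulation is chosen precisely because it survives unchanged when $\Omega$ is not convex: in that case the tangent-hyperplane reflection you describe does not reflect the collar into $\Omega$, and one must Kelvin-invert first, after which the monotonicity direction is no longer the Euclidean inward normal. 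Your foliation argument would then need to be re-run in the inverted coordinates (or the collar width shrunk so that the inverted rays are quantitatively close to normals), whereas the $I_y$ abstraction absorbs all of this into the statement $|I_y|\ge\gamma$. Also minor: as written your monotonicity claim requires the collar of validity to have width $2\delta$, not $\delta$, since you compare $t$ with $t+\delta$; this is only a relabeling but should be stated.
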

\begin{proof}
For the reader's convenience we shall provide a sketch of the proof, displaying the dependence of the constants on the exponents $p,q$.

Let $\O_\d:=\{x\in \O:  \text{dist}(x,\de\Omega)> \d\}$. By using the moving planes technique for semilinear elliptic systems, in combination with the Kelvin transform (see Step 2 of the proof of Theorem 31.2 and  Remark 31.5(ii) in \cite{quittner2019superlinear}, as well as the considerations after Theorem 1.2 in \cite{de1982priori} for the case $n=2$), one can show the existence of positive constants $\d$ and $\gamma$ depending only on~$\O$, such that
$$
 \text{ for each } x\in \O\setminus \O_{2\d} \quad  \text{ there exists a measurable set } I_x \text{ such that}
$$
\begin{equation}\label{prop:int:MovPlane}
\begin{array}{rl}
(i) & I_x \subseteq \O_\d \\
(ii) & |I_x|\geq \gamma \\
(iii) & u(x)\leq u(\xi) \quad \text{and}\quad v(x)\leq v(\xi)\quad \text{for all } \xi\in I_x.
\end{array}
\end{equation}
For instance, when $\Omega$ is convex, $I_x$ can be taken to be a part of a cone with a vertex at~$x$. Non-convex domains can be treated by Kelvin inversion of neighborhoods of boundary points in which the boundary is not convex.

In particular, we know that the maxima of $u$ and $v$ are achieved some unit distance away from the boundary of $\O$ -- in $\O_\d$.

For the second part of the proposition, we argue as in Step 1 of \cite[Theorem 31.2]{quittner2019superlinear}, additionally tracking the dependence on $p$, $q$ and $\O$. Multiplying each equation in \eqref{LEsys} by the Dirichlet eigenfunction $\phi$ and integrating by parts,  we get
\begin{equation}\label{prop:int:parts}
\lambda \int_{\O} u\phi \, dx= \int_{\O} v^p \phi \, dx \quad \text{and} \quad \lambda \int_{\O} v \phi \, dx= \int_{\O} u^q \phi \, dx.
\end{equation}
Furthermore, by \eqref{prop:int:MovPlane} we see that for all $y\in \O\setminus \O_{2\d}$
\begin{equation}\label{prop:int:key}
\int_{\O} u^q \phi ~dx \geq \int_{I_y} u^q \phi ~dx \geq \gamma u^q(y) \inf_{I_y}\phi \geq \gamma u^q(y) \inf_{\O_\d} \phi,
\end{equation}
as $I_y\subseteq \O_\d$.
The  Harnack inequality for $\Delta \phi+\lambda\phi=0$, $\phi>0$, gives us $ \inf_{\O_\d} \phi = c_0>0$ with $c_0$ depending only on $\Omega$, so that integrating the estimate in \eqref{prop:int:key} over $y\in \O\setminus \O_{2\d}$, we obtain
\[
 \gamma c_0 \int_{\O\setminus \O_{2\d}} u^q(y)\, dy \leq |\O| \int_{\O} u^q \phi \, dx.
\]
Thus,
\begin{align*}
\int_\O u^q \,dx & \leq \int_{\O_{\d}} u^q \,dx +  \int_{\O\setminus \O_{2\d}} u^q\, dx  \leq \int_{\O_\d} \frac{u^q \phi}{\inf_{\O_\d} \phi} \, dx + \frac{|\O|}{\gamma c_0}  \int_{\O} u^q \phi \, dx  \\
& \leq \left(\frac{1}{c_0} + \frac{|\O|}{\gamma c_0} \right) \int_\O u^q \phi \, dx = \frac{|\O|/\gamma +1}{c_0} \l \int_\O v \phi = c \|v\phi\|_{L^1(\O)}
\end{align*}
by \eqref{prop:int:parts}, where $c$ depends only on $\O$.
Similarly,
\[
\int_\O v^p \,dx  \leq c  \|u \phi \|_{L^1(\O)}.
\]
Now, in order to bound the $L^1$-norms of $u\phi$ and $v\phi$, one applies the Jensen inequality to the right-hand sides of \eqref{prop:int:parts}:
\begin{equation}\label{prop:int:Jensen}
\int_{\O} v^p \phi \,dx \geq \|v\phi\|_{L^1(\O)}^p \quad \text{and}\quad \int_{\O} u^q \phi \,dx \geq \|u\phi\|_{L^1(\O)}^q.
\end{equation}
Combining \eqref{prop:int:parts} and \eqref{prop:int:Jensen} yields the desired estimates
\begin{equation*}
\begin{array}{c}
\| u\phi \|_{L^1(\O)} \leq \l^\frac{p+1}{pq-1}= \l^\frac{\frac{1}{pq} + \frac{1}{q}}{1-\frac{1}{pq}} \leq \max\left(1, \l^{2/\bar{\k}} \right) = C(\O,\k), \\
\| v \phi\|_{L^1(\O)} \leq \l^\frac{q+1}{pq-1}= \l^\frac{\frac{1}{pq} + \frac{1}{p}}{1-\frac{1}{pq}}\leq \max\left(1, \l^{2/\bar{\k}} \right) = C(\O,\k).
\end{array}
\end{equation*}
where $\bar{\k}=\k/(\k+1)$ and we used \eqref{eq:expocond1}.
\end{proof}

The next lemma is the standard $L^\infty$-estimate for the Poisson equation, applied to \eqref{LEsys}.
\begin{lemma}\label{lem:relativesizes} Let $(u,v)$ be a classical solution of \eqref{LEsys} in a bounded domain $\O\subset \R^2$, $M=\max_{\overline{\O}} u$ and $N=\max_{\overline{\O}} v$. Then there exists a positive constant $c_0$, depending only on $\text{diam}(\O)$, such that
\[
\frac{N}{M^q} \leq c_0 \quad \text{and} \quad \frac{M}{N^p} \leq c_0.
\]
\end{lemma}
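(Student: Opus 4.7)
\medskip

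\noindent\textbf{Proof proposal for Lemma \ref{lem:relativesizes}.} The plan is to compare each component of the solution to a quadratic barrier on a planar ball containing $\Omega$. From the second equation in \eqref{LEsys} and the definition of $M$,
\[
-\D v = u^q \leq M^q \quad \text{in } \Omega, \qquad v = 0 \quad \text{on } \partial \Omega.
\]
I would fix any $x_0\in \Omega$, set $R:= \text{diam}(\Omega)$ (so that $\Omega \subset B_R(x_0)$), and introduce
\[
w(x):= \frac{M^q}{4}\bigl(R^2 - |x-x_0|^2\bigr).
\]
Since $\Delta |x|^2 = 4$ in dimension two, one has $-\Delta w = M^q$ in $B_R(x_0)$ and $w\geq 0$ on $\partial\Omega$. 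The classical maximum principle applied to $v-w$ then yields $v\leq w \leq \frac{R^2}{4}M^q$ throughout $\Omega$, i.e.\ $N\leq c_0\, M^q$ with $c_0 := R^2/4$ depending only on $\text{diam}(\Omega)$.

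The second inequality follows by exactly the same argument, now comparing $u$ with the quadratic barrier $\tilde w(x):= \frac{N^p}{4}(R^2-|x-x_0|^2)$ and using $-\Delta u = v^p \leq N^p$ in $\Omega$, $u=0$ on $\partial\Omega$. This gives $M\leq c_0 N^p$ with the same constant.

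There is no substantive obstacle here: the lemma is a routine application of the sub/super-solution method to the Poisson equation with a bounded right-hand side, and the factor $R^2/4$ is precisely the sup of the Green representation of the constant $1$ against the ball $B_R(x_0)$. The only minor point to verify is that the centering at an arbitrary $x_0\in\Omega$ lets us use $R=\text{diam}(\Omega)$ rather than the circumradius; this is just the observation that $\Omega$ lies in the ball of radius $\text{diam}(\Omega)$ about any of its own points.
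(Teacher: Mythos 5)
Your proposal is correct and uses essentially the same argument as the paper: a quadratic barrier $\frac{1}{4}(\mathrm{diam}(\Omega)^2-|x-x_0|^2)$ scaled by the sup of the right-hand side, plus the comparison principle. The paper first rescales to $\tilde u=u/M$, $\tilde v=v/N$ and centers the barrier at the maximum point of $\tilde u$, but that normalization is cosmetic; your direct version produces the same constant $c_0=\mathrm{diam}(\Omega)^2/4$.
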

\begin{proof}
Let $\tilde{u}:= u/M$ and $\tilde{v}:=v/N$. Then $\|\tilde{u}\|_{L^{\infty}(\O)} = 1=\|\tilde{v}\|_{L^{\infty}(\O)}$ and
\[
-\D \tilde{u} = (N^p/M) \tilde{v}^p \quad \text{and}\quad -\D \tilde{v} = (M^q/N) \tilde{u}^q.
\]
Without loss of generality, assume that $\tilde{u}$ achieves its maximum at $0\in \O$, $\tilde{u}(0)=1$, and consider the function
\[
h(x):=(\text{diam}(\O)^2 - |x|^2)\frac{N^p}{4 M} \quad \text{for } x\in \O.
\]
We see that $h\geq 0 = \tilde{u}$ on $\de \O$ and that
$
-\D h =  \frac{N^p}{M} \geq -\D \tilde{u}.
$
in $\O$, so $h\geq \tilde{u}$ in $\O$ by the comparison principle. In particular,
\[
1=\tilde{u}(0)\leq h(0) = \frac{\text{diam}(\O)^2}{4} \frac{N^p}{M} \quad \Longrightarrow \quad \frac{M}{N^p}\leq \text{diam}(\O)^2/4 :=c_0.
\]
Exchanging the roles of $u$ and $v$ yields the other estimate $N/M^q \leq c_0$.
\end{proof}

\subsection{Proof of Theorem \ref{main}} We divide the proof into several steps.\medskip

\noindent\emph{Step 1.} In the first step we employ the inhomogeneous Harnack inequality to estimate  how fast the values of $u$ and $v$ can decrease away from their maxima.
\begin{lemma}\label{lem:inhHarnack} Let $(u,v)$ be a classical solution of \eqref{LEsys} in a domain $\O\subseteq \R^2$. Assume that $M=\max_{\overline{\O}} u$ is attained at $x_u\in \O$ and $N=\max_{\overline{\O}} v$ is attained at $x_v\in \O$. Fix $r>0$. Then
\begin{equation}\label{lem:inhHarnack:1}
M - u(x) \leq C_0 N^{p} r^2 \quad\text{for all } x\in B_r(x_u), \quad \text{provided }\; B_{2r}(x_u)\subseteq \O,
\end{equation}
for an absolute constant $C_0>0$. Similarly,
\begin{equation}\label{lem:inhHarnack:2}
N - v(x) \leq C_0 M^{q} r^2 \quad\text{for all } x\in B_r(x_v), \quad \text{provided }\; B_{2r}(x_v)\subseteq \O.
\end{equation}
\end{lemma}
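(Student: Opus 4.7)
The plan is to apply the inhomogeneous Harnack inequality to the nonnegative function $w := M - u$ on the ball $B_{2r}(x_u) \subseteq \Omega$. Since $u$ attains its maximum on $\overline{\Omega}$ at $x_u$, we have $w \geq 0$ in $\Omega$ and $w(x_u) = 0$, so in particular $\inf_{B_r(x_u)} w = 0$. Moreover, $w$ satisfies
$$-\Delta w \,=\, \Delta u \,=\, -v^p \qquad \text{in } B_{2r}(x_u),$$
and the right-hand side is bounded in absolute value by $N^p$ throughout this ball, since $v \leq N$.

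The inhomogeneous Harnack inequality for nonnegative solutions of $-\Delta w = f$ with bounded $f$ (see, e.g., Theorems 8.17--8.18 or 8.20 in Gilbarg--Trudinger) then yields
$$\sup_{B_r(x_u)} w \,\leq\, C_0\Bigl( \inf_{B_r(x_u)} w + r^2 \|v^p\|_{L^\infty(B_{2r}(x_u))} \Bigr) \,\leq\, C_0\bigl(0 + r^2 N^p\bigr),$$
which is precisely the desired estimate \eqref{lem:inhHarnack:1}. The constant $C_0$ is absolute (depending only on the dimension $n = 2$) because the Laplacian has constant coefficients, and the Harnack constant for a concentric pair of balls scales trivially under the dilation that maps $B_{2r}$ to $B_2$. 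The second bound \eqref{lem:inhHarnack:2} follows at once by interchanging the roles of $(u, M, p)$ and $(v, N, q)$.

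There is no substantive obstacle in this lemma; it is essentially a direct invocation of the inhomogeneous Harnack inequality, with only the trivial bound $v \leq N$ (respectively $u \leq M$) used for the right-hand side. The one point worth emphasizing is the necessity that $C_0$ be truly absolute, independent of $r$, $p$, $q$, and the particular solution. This will be crucial in the subsequent steps of the proof of Theorem~\ref{main}, where $r$ is selected adaptively in terms of $N^p$ and $M^q$ and any hidden dependence of $C_0$ on these quantities would be fatal. The fact that the Harnack constant for $-\Delta$ on concentric balls depends only on dimension ensures this automatically.
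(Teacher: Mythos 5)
Your proof is correct and is essentially the same as the paper's: both apply the inhomogeneous Harnack inequality to the nonnegative superharmonic function $M-u$ (respectively $N-v$), using $\inf_{B_r}(M-u)=0$ at the maximum point and the crude bound $|v^p|\le N^p$ on the right-hand side. The only difference is cosmetic — the paper first rescales in space and in amplitude so as to invoke a normalized Harnack inequality (the right-hand side bounded by $1$), whereas you invoke the scaling-invariant form on $B_r\subset B_{2r}$ directly; both yield the same absolute constant $C_0$.
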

\begin{proof}
Let $\rho:=r N^{(p-1)/2}$. Define the function
\begin{align*}
\hat{u}(y) &:=\frac{M- u(x_u+ y N^{-\frac{p-1}{2}})}{N} \quad \text{for} \quad  y\in B_{2\rho},
\end{align*}
and note that $\hat{u}\geq 0$ in  $B_{2\rho}$ while $\hat{u}(0)=0$. Moreover,
\[
\D \hat{u}(y) = - N^{-p} \Delta u(x_u + y N^{-(p-1)/2}) = \left(\frac{v}{N}\right)^p(x_u + y N^{-(p-1)/2}) \quad
\text{for } y\in B_{2\rho},
\]
 so that $|\D\hat{u}|\leq 1$ in $B_{2\rho}$. Applying the inhomogeneous Harnack inequality (see for instance Theorem 4.17 in \cite{HanLin}) to $\hat{u}$ in $B_{2\rho}$, we deduce that
\[
\sup_{B_{\rho}} \hat{u} \leq C_0(\inf_{B_\rho} \hat{u} + \rho^2 \|\D\hat{u}\|_{L^{\infty} (B_{2\rho})}) = C_0\,\rho^2,
\]
from which we conclude that
\[
\sup_{B_{r}(x_u)}(M-u(x))=  N \sup_{B_{\rho}} \hat{u} \leq C_0N\rho^2 = C_0 N^p r^2.
\]
The estimate \eqref{lem:inhHarnack:2} for $v$ is obtained analogously.
\end{proof}

\noindent\emph{Step 2.} In the second step we will use Lemma \ref{lem:inhHarnack} to estimate $u^q$ and $v^p$ from below around the points of maximum $x_u$ of $u$ and $x_v$ of $v$, respectively ($M=u(x_u)$ and $N=v(x_v)$). \smallskip

{\it We claim that if
$$
R_1=\left(\frac{M}{qN^p}\right)^{1/2}, \qquad R_2=\left(\frac{N}{pM^q}\right)^{1/2},
$$
then  for some constant $\hat{C}=\hat{C}(\O,\k)$ we have $\hat{C}R_1<1$, $\hat{C}R_2<1$, and
\begin{equation}\label{main:vlow}u^q\geq  e^{-1/3} M^q\;\mbox{  in a ball  }\;B_{\hat C R_1}(x_u)\subset\Omega,\quad\; v^p\geq e^{-1/3} N^p\;\mbox{  in a ball }\;B_{\hat CR_2}(x_v)\subset\Omega.
\end{equation}}

\noindent{\it Proof.}
By the first part of Proposition \ref{prop:int}, we know that $d(x_u, \de \O)$ and $d(x_v, \de \O)$ are both greater than  $\d=\d(\O)>0$.  Furthermore, the result of Lemma \ref{lem:relativesizes} implies that
if
\[
r_1 := \frac{\min(\d,1)}{2\sqrt{c_0}}\left(\frac{M}{N^p}\right)^{1/2} \quad \text{and} \quad r_2 := \frac{\min(\d,1)}{2\sqrt{c_0}}\left(\frac{N}{M^q}\right)^{1/2}
\]
then  $r_1, r_2 \in (0,\min(\d,1)/2)$ so that
\[
 B_{2r}(x_u)\subset \O \quad \text{for all } r\in(0,r_1), \quad  B_{2r}(x_v)\subset \O \quad \text{for all } r\in(0,r_2).
\]
 Set $\bar{c}:= \max(1, \sqrt{C_0\d^2/c_0)})$, where $C_0$ is the constant from Lemma \ref{lem:inhHarnack}.  We can therefore apply the $u$-estimate in Lemma \ref{lem:inhHarnack} for $r=r_1/(\bar{c}\sqrt{q}) \leq r_1 <1$, obtaining by the definition of $r_1$
\begin{equation}\label{main:uest}
M- u \leq C_0 N^p \frac{r_1^2}{q}  \leq \frac{C_0\d^2}{4c_0 \bar{c}^2} \frac{M}{q} \leq \frac{M}{4q} \quad \text{in} \quad B_{r_1/(\bar{c}\sqrt{q})}(x_u),
\end{equation}
which implies that near the point of maximum of $u$,  by using that $\log(1-x)\geq (-4/3)x$ for $x\in (0,1/4)$,
\begin{equation}\label{main:bduq}
u^q(y) \geq M^q(1-1/(4q))^q  \geq e^{-1/3} M^q \quad \text{for } y\in B_{r_1/(\bar{c}\sqrt{q})}(x_u)\subset \O.
\end{equation}
Similarly, the $v$-estimate of Lemma \ref{lem:inhHarnack} for $r=r_2/(\bar{c}\sqrt{p})\leq r_2 <1$ yields
\begin{equation}\label{main:vest}
N- v \leq C_0 M^q \frac{r_2^2}{p} \leq \frac{N}{4p}\quad \text{in} \quad B_{r_2/(\bar{c}\sqrt{p})}(x_v),
\end{equation}
which implies that near the point of maximum of $v$,
\begin{equation}\label{main:bdvp}
v^p(y) \geq N^p(1-1/(4p))^p \geq  e^{-1/3} N^p \quad \text{for } y\in B_{r_2/(\bar{c}\sqrt{p})}(x_v)\subset \O.
\end{equation}

\medskip

\noindent \emph{Step 3.} We will now obtain some initial crude estimates relating $M$ and $N$, applying the pointwise bounds from the previous step to the integral estimates \eqref{prop:int:eq1}--\eqref{prop:int:eq2} of Proposition~\ref{prop:int}.

{\it We claim that
\begin{equation}\label{main:MNM}
N\leq C M^{\frac{q}{p+1}}, \qquad M \leq C N^{\frac{p}{q+1}}.
\end{equation}
}

\noindent{\it Proof.} Combining \eqref{prop:int:eq2} with \eqref{main:vlow} we get
\begin{equation}\label{main:NM}
C\geq \int_\O u^q \, dx \geq \int_{B_{\hat CR_1}(x_u)} u^q \, dx \geq  e^{-1/3} M^q |B_{\hat CR_1}(x_u)| =cM^qR_1^2= c' \frac{M^{q+1}}{q N^p},
\end{equation}
which gives us the bound
\begin{equation*}
M \leq (Cq/c')^{1/(q+1)} N^{\frac{p}{q+1}}\leq C N^{\frac{p}{q+1}}.
\end{equation*}
The analogous estimate for $N$ reads
\begin{equation*}
N \leq c M^{\frac{q}{p+1}}.
\end{equation*}

\noindent \emph{Step 4.} {\it Without loss of generality, we may assume that $M\geq A$ and $N\geq A$, for  some large constant $A=A(\O,K,\k)\geq 1$ (which will be chosen later in the proof)}.

We recall that the statement of the theorem is $M\le C=C(\O,K,\k)$. Hence we can assume $M\ge A$.
 Furthermore, if $N< A$, then \eqref{main:MNM} would yield the desired bound for $M$: $$M\leq C A^{\frac{p}{q+1}}\leq C  A^{\frac{K q}{q+1}}\leq C A^K \quad \text{since} \quad p\leq Kq$$
(this is where we use the hypothesis $p\leq Kq$).
\bigskip

\noindent \emph{Step 5.} In this step we will apply the key estimates \eqref{main:vlow} to the Green's representation formula, precisely for the values of $u$ at $x_v$ and of $v$ at $x_u$.

{\it We claim that
\begin{equation}\label{main:MNMN}
\frac{(M+C')M^q}{N^{p+1}} \geq \frac{C}{p} \log \left(c \frac{p M^q}{N}\right), \qquad \frac{(N+C')N^p}{M^{q+1}} \geq \frac{C}{q} \log \left(c \frac{q N^p}{M}\right).
\end{equation}
}

\noindent{\it Proof.} Denote by $G(x,y)$ the Green's function for the Laplacian $-\D$ in $\O$, that is,
\[
G(x,y) = \frac{1}{2\pi} \log\frac{1}{|x-y|} - g(x,y),
\]
where for each fixed $x\in \O$, $g(x,\cdot)$ is harmonic in $\O$ with boundary data
\[
g(x,y) =  \frac{1}{2\pi} \log\frac{1}{|x-y|}, \quad y\in \de \O.
\]
Since $|x_v - y|\geq \d$ for $y\in \de \O$ we see that $g(x_v, \cdot) \leq C$ on $\de\O$, so that the maximum principle yields
\begin{equation}\label{main:Green_rem}
g(x_v,y) \leq C \quad \text{for all } y\in \O.
\end{equation}
Applying now Green's representation formula,  and using the positivity of $G$, we see that for each $\Op\subseteq\O$
\begin{align}
M \geq u (x_v)  &= \int_\O G(x_v,y) v^p(y) \, dy \ge \int_\Op \frac{1}{2\pi} \log\frac{1}{|x_v-y|} v^p(y)\, dy - \int_\Op g(x_v,y) v^p(y) \, dy \notag \\
& \geq \int_\Op \frac{1}{2\pi} \log\frac{1}{|x_v-y|} v^p(y)\, dy - C \int_\Op v^p dy \notag \\
& \geq \int_\Op \frac{1}{2\pi} \log\frac{1}{|x_v-y|} v^p(y)\, dy - C', \label{main:Green_u}
\end{align}
where in the last two lines we used \eqref{main:Green_rem} and the bound \eqref{prop:int:eq1} of Proposition \ref{prop:int}.

 Next we observe that since $\hat{C}R_2 < 1$, the logarithmic term in \eqref{main:Green_u} is positive if we take $\O'=B_{\hat CR_2}(x_v)$.
We can thus utilize the pointwise bound \eqref{main:vlow} for $v^p$ to estimate this integral from below: 
\begin{align}
\int_\Op \frac{1}{2\pi} \log\frac{1}{|x_v-y|} v^p(y)\, dy & = \int_{B_{\hat CR_2}(x_v)} \frac{1}{2\pi} \log\frac{1}{|x_v-y|} v^p(y) \geq c' N^p \int_{0}^{\hat CR_2} \log\left(\frac{1}{r}\right)\, r \, dr  \notag \\
& \geq c' N^p R_2^2 \log \frac{\tilde{c}}{R_2}= \frac{N^{p+1}}{M^q} \frac{C}{p} \log \left(c \frac{p M^q}{N}\right)\label{main:logterm},
\end{align}
where we used the inequality $$\int_0^\rho \log(1/r) r \, dr \geq \frac{1}{2}\rho^2 \log(1/\rho)$$ and the fact that $R_2^2 = N/(pM^q)$. Combining \eqref{main:Green_u} and \eqref{main:logterm}, we obtain
\begin{equation}\label{main:ineq_u}
\frac{(M+C')M^q}{N^{p+1}} \geq \frac{C}{p} \log \left(c \frac{p M^q}{N}\right).
\end{equation}
Analogously, exchanging the roles of $u$ and $v$, we obtain the mirror estimate:
\begin{equation}\label{main:ineq_v}
\frac{(N+C')N^p}{M^{q+1}} \geq \frac{C}{q} \log \left(c \frac{q N^p}{M}\right).
\end{equation}

\noindent \emph{Step 6. Conclusion.} Plugging the upper bound \eqref{main:MNM} for $N$ into \eqref{main:ineq_u}, we see that
\begin{equation*}
\frac{(1+C'/M)M^{q+1}}{N^{p+1}} \geq \frac{C}{p} \log \left(\tilde{c} p M^{q(1-\frac{1}{p+1})}\right) \geq \frac{C}{p} \log \left(\tilde{c} M^{q(1-\frac{1}{p+1})}\right).
\end{equation*}
Note that the assumption $M\geq A$ from Step 4 implies that $C'/M\leq 1$ if $A\geq C'$ and that
\[
\tilde{c} M^{q(1-\frac{1}{p+1})} \geq \tilde{c} M^{q/2} \geq M^{q/4} \quad\text{if}\quad A^{q/4} \tilde{c} \geq 1,
\]
i.e. provided $A\geq C_1'  := \min\{1,\tilde c\}^{-4}\geq  (\tilde{c}^{-1})^{4/q}$.
The mirror analysis involving \eqref{main:MNM} and \eqref{main:ineq_v}  yields
\begin{equation*}
\frac{(1+C'/N)N^{p+1}}{M^{q+1}} \geq \frac{C}{q} \log \left(\tilde{c}N^{p(1-\frac{1}{q+1})}\right),
\end{equation*}
and the lower bound $N\geq A$ from Step 4 implies that $C'/N\leq 1$ if $A\geq C'$ and that
\[
\tilde{c} N^{p(1-\frac{1}{q+1})} \geq \tilde{c} N^{p/2} \geq N^{p/4} \quad\text{if}\quad A^{p/4}\tilde{c} \geq 1,
\]
i.e. provided $A\geq C_2' \geq (\tilde{c}^{-1})^{4/p}$.
Hence, choosing $A$ to be
\[
A:=\max\left(2, C', C'_1, C'_2\right)
\]
we can conclude that
\begin{equation}\label{main:final}
2 M^{q+1}/N^{p+1} \geq \frac{C}{p} \log M^{q/4} \quad \text{and} \quad 2 N^{p+1}/M^{q+1}  \geq \frac{C}{q} \log N^{p/4}.
\end{equation}
Multiplying the two inequalities in \eqref{main:final}, we obtain
\[
4 \geq \frac{C^2}{pq} \log M^{q/4} N^{p/4} = c\log M \log N,
\]
and since $\log N \geq \log A \geq \log 2$, we can conclude that
\[
\log M \leq \frac{4}{c \log N} \leq \frac{4}{c\log 2}=C.
\]

This concludes the proof of Theorem \ref{main}.

\subsection{Proof of Theorem  \ref{coro_IntgBnd}}
The proof of the integral bound \eqref{intgCond} employs the \emph{global Harnack inequality} for supersolutions to second-order elliptic PDE in divergence form from~\cite{sirakov2022global}. For the reader`s convenience, we state a version of this Harnack inequality in the setting of superharmonic functions, relevant for our context.
\begin{theorem}(\cite[Theorem 1.3]{sirakov2022global})\label{thm:GHI} Assume $u\in H^1_0(\O)$ is a nonnegative weak solution to $-\D u\geq 0$ in a bounded $C^{1,1}$-domain $\O\subset \R^n$. Then for each $t <\frac{n}{n-1}$,
\[
\inf_\O \frac{u}{d} \geq C\|u\|_{L^t(\O)},
\]
where $d(x):=d(x,\de \O)$ and the constant $C>0$ depends on $\O$, $t$, and  $n$.
\end{theorem}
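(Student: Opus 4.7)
The plan is to derive the global Harnack estimate from the Green's function representation of $u$, by matching a pointwise lower bound on $u/d$ against a dual $L^t$-upper bound on $u$, both expressed through a common boundary-weighted total mass of $-\D u$. First I would invoke classical potential theory: since $u \in H^1_0(\O)$ is superharmonic, $\m := -\D u$ is a nonnegative element of $H^{-1}(\O)$ (hence a Radon measure on $\O$), and $u = G\m$ in the sense that $u(x) = \int_\O G(x,y)\,d\m(y)$, where $G$ is the Dirichlet Green's function of $-\D$ in $\O$.

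For the lower half of the argument I would use the sharp two-sided estimate for the Green's function in $C^{1,1}$ domains,
\[
G(x,y) \asymp |x-y|^{2-n}\min\!\Bigl\{1,\, \frac{d(x)d(y)}{|x-y|^{2}}\Bigr\} \qquad (n\geq 3),
\]
with an analogous logarithmic form in dimension $n=2$. From either of these one extracts a uniform lower bound $G(x,y) \geq c_\O\, d(x)\, d(y)$ valid for all $x,y\in\O$, with $c_\O>0$ depending on $\O$ and $n$. Integrating against $\m$ then gives
\[
\frac{u(x)}{d(x)} \;\geq\; c_\O \int_\O d(y)\,d\m(y) \;=:\; c_\O\, I \qquad \text{for every } x\in\O,
\]
so $\inf_\O (u/d) \geq c_\O\, I$.

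For the matching upper bound I would test $u$ against an arbitrary $g\in L^{t'}(\O)$ with $t'=t/(t-1)>n$, and note that its Green potential $w:=Gg$ solves $-\D w = g$, $w=0$ on $\de\O$. Calder\'on--Zygmund theory gives $w\in W^{2,t'}_0(\O)$ with $\|w\|_{W^{2,t'}}\leq C\|g\|_{L^{t'}}$; because $t'>n$, the Morrey embedding makes $w$ globally Lipschitz, so its vanishing on $\de\O$ yields the crucial pointwise bound $|w(y)|\leq C\|g\|_{L^{t'}}\, d(y)$. Fubini and the representation $u=G\m$ then give
\[
\int_\O u\,g\,dx \;=\; \int_\O Gg\,d\m \;\leq\; C\|g\|_{L^{t'}}\int_\O d\,d\m \;=\; C\|g\|_{L^{t'}}\,I,
\]
and taking the supremum over unit vectors in $L^{t'}$ yields $\|u\|_{L^t}\leq CI$. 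Combining with the lower bound produces $\inf_\O (u/d) \geq (c_\O/C)\|u\|_{L^t}$, which is the claim.

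The main obstacle is the rigorous justification of the two-sided Green's function estimate on a general $C^{1,1}$ domain, in particular the uniform lower bound $G(x,y)\gtrsim d(x)d(y)$, which requires a boundary Harnack principle and a Hopf-type analysis near $\de\O$ (and in which the $C^{1,1}$ regularity of $\de\O$ enters). The restriction $t<n/(n-1)$ is then seen to be natural: it is precisely the dual to the Morrey threshold $t'>n$ that makes $Gg$ Lipschitz, which is what allows the factor $d(y)$ to be peeled off using the Dirichlet boundary condition; below this threshold the gradient of $Gg$ need not be bounded, and the elementary proof above breaks down.
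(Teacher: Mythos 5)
Your argument is sound, but note that the paper itself offers no proof of this statement: Theorem \ref{thm:GHI} is quoted verbatim from \cite[Theorem 1.3]{sirakov2022global}, where it is established for general divergence-form operators $-\mathrm{div}(A\nabla u)\ge 0$ with merely measurable, uniformly elliptic coefficients, by purely PDE/measure-theoretic means (growth lemmas and a boundary weak Harnack inequality). Your route — writing $u=G\mu$ with $\mu=-\D u\ge 0$, using the two-sided Green's function bounds in $C^{1,1}$ domains to get $G(x,y)\ge c_\O\, d(x)d(y)$ and hence $\inf_\O u/d\ge c_\O\int_\O d\,d\mu$, and then dualizing via the Lipschitz bound $|Gg|\le C\|g\|_{L^{t'}}d$ for $g\in L^{t'}$, $t'>n$, to get $\|u\|_{L^t}\le C\int_\O d\,d\mu$ — is a genuinely different and correct proof in the superharmonic setting considered here. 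It is arguably more transparent: it isolates the boundary-weighted mass $\int_\O d\,d\mu$ as the quantity controlling both sides, and it explains the threshold $t<n/(n-1)$ as exactly dual to the Morrey exponent $t'>n$ making $Gg$ Lipschitz. What it costs is generality: the sharp pointwise Green's function estimates you rely on (Zhao/Gr\"uter--Widman type, plus their logarithmic analogue for $n=2$) are specific to operators whose Green's function is comparable to that of the Laplacian, so the argument does not extend to rough coefficients the way the cited proof does.

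Two small points to tidy up if you were to write this in full. First, you should justify that $\mu$ has finite $d$-weighted mass and that $u=G\mu$ with no harmonic remainder: both follow from $u\in H^1_0(\O)$ (test $\int_\O\nabla u\cdot\nabla\f$ against an increasing sequence of nonnegative cutoffs converging to the first eigenfunction $\f\asymp d$, and use the Riesz decomposition together with the fact that the greatest harmonic minorant of a nonnegative $H^1_0$ function is zero). Second, the duality step as written needs $t>1$; for $t\le 1$ the estimate follows from the case $1<t<n/(n-1)$ by H\"older's inequality on the bounded domain (and for $t=1$ one can also argue directly, since $Gg$ is Lipschitz for $g\in L^\infty$). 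Neither issue affects the correctness of the approach.
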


\begin{proof}[Proof of Theorem \ref{coro_IntgBnd}]\
Without loss of generality, we may assume that $\O$ is star-shaped with respect to the origin:
\begin{equation}\label{star}
x \cdot \nu(x) \geq 0 \quad \text{for all } x\in \de\O,
\end{equation}
where $\nu=\nu(x)$ denotes the unit outer normal to $\de \O$ at $x\in\de \O$.
We will use the following well known Pohozaev identity for the Lane-Emden system (see \cite{Mit} or \cite[Lemma 31.4(ii)]{quittner2019superlinear}): any classical solution $(u,v)$ of \eqref{LEsys} in a bounded $C^2$-domain $\O\subset \R^2$  satisfies
\begin{equation}\label{pohoz1}
 \frac{2}{p+1}\int_{\O} v^{p+1} \, dx + \frac{2}{q+1}\int_{\O} u^{q+1} \, dx = \int_{\de \O} (x \cdot \nu) u_{\nu} v_\nu \, ds.
\end{equation}
We also know via integration by parts that
\begin{equation}\label{int_part}
\int_\O v^{p+1} \, dx = \int_\O v (-\D u)  \, dx  = \int_\O \nabla u \cdot \nabla v  \, dx  = \int_\O (-\D v) u  \, dx = \int_\O u^{q+1} \, dx,
\end{equation}
so that we may rewrite the left-hand side of \eqref{pohoz1} as
\begin{equation}\label{pohoz2}
\left(\frac{2}{p+1} + \frac{2}{q+1}\right) \int_\O v^{p+1} \, dx =   \int_{\de \O} (x \cdot \nu) u_{\nu} v_\nu \, ds.
\end{equation}
Using the result of Theorem \ref{main} and that the exponents $p\sim q$ in the sense of $p/K \leq q \leq K p$, we can bound the left-hand side of \eqref{pohoz2} by
\begin{align}
\left(\frac{2}{p+1} + \frac{2}{q+1}\right) \int_\O v^{p+1} \, dx & \leq \left(\frac{2}{p} + \frac{2}{p/K} \right)\int_\O v^{p+1} \, dx \notag \\
&\leq \frac{2K+2}{p} \|v\|_{L^{\infty}(\O)}\int_\O v^p\, dx \leq \frac{C}{p}\int_{\O} v^p \, dx, \label{LHS}
\end{align}
where $C = C(\O, K, \k)$.

In order to estimate the right-hand side of \eqref{pohoz2} from below, we shall apply the global Harnack inequality of Theorem \ref{thm:GHI} to the non-negative superharmonic functions $u$ and $v$, taking the exponent $t=1<n/(n-1)=2$. Thus, we get that for every $x\in \de \O$
\begin{equation}\label{quantHopf}
-u_\nu(x) \geq \inf_{\O} u/d \geq C \|u\|_{L^1(\O)} \quad \text{and} \quad -v_\nu(x) \geq \inf_{\O} v/d \geq C \|v\|_{L^1(\O)}.
\end{equation}
Since the domain is star-shaped \eqref{star}, we have
$$
(x \cdot \nu) u_{\nu} v_\nu = (x \cdot \nu) (-u_\nu)(-v_\nu) \geq C^2 (x \cdot \nu) \|u\|_{L^1(\O)}\|v\|_{L^1(\O)},
$$
so that we can bound using the divergence theorem
\begin{align}
\int_{\de \O} (x \cdot \nu) u_{\nu} v_\nu \, ds & \geq  C^2 \left(\int_{\de \O} (x \cdot \nu) \, ds\right) \|u\|_{L^1(\O)}\|v\|_{L^1(\O)} \notag \\ & =  C^2 \left(\int_\O \text{div}(x) \, dx\right) \|u\|_{L^1(\O)}\|v\|_{L^1(\O)} =2 C^2 |\O| \|u \|_{L^1(\O)} \|v \|_{L^1(\O)}  \notag \\
& \geq c \|u\phi \|_{L^1(\O)} \|v \phi\|_{L^1(\O)} \geq c' \int_\O v^p \, dx \int_\O u^q \, dx, \label{RHS}
\end{align}
where $\phi$ is the $L^1$-normalized first Dirichlet eigenfunction of the Laplacian and we used estimates \eqref{prop:int:eq1}--\eqref{prop:int:eq2} of Proposition \ref{prop:int} to derive the last inequality. Combining \eqref{pohoz2}, \eqref{LHS} and \eqref{RHS}, we obtain
\[
\frac{C}{p} \int_\O v^p \geq c \int_\O v^p \, dx \int_\O u^q \, dx,
\]
so that
\[
p \int_\O u^q \, dx \leq C'.
\]
Now, by \eqref{int_part}, \eqref{prop:int:eq2}, and Theorem \ref{main},
\[
p\int_\O \nabla u \cdot \nabla v  \, dx  = p \int_\O u^{q+1}  \, dx \leq p \|u\|_{L^\infty(\O)} \int_\O u^q \, dx \leq C.
\]

\end{proof}


\section{The biharmonic Lane-Emden equation in the plane.}\label{secbiham}

In this section we will treat the $p=1$, $q>1$ case of \eqref{LEsys}:
\begin{equation}\label{LEbh}
\begin{cases}
-\D u  = v, \quad u>0 & \text{in} \quad \O, \\
-\D v = u^q, \quad v>0 & \text{in} \quad \O, \\
u  = v = 0   & \text{on} \quad \de \O
\end{cases}
\end{equation}
which corresponds to the Navier problem for the biharmonic Lane-Emden equation with homogeneous data. We will establish that while the $u$-maximum
$$
M_q:=\max_{\overline{\O}} u_q \to 1 \quad \text{as } q\to\infty
$$
for general bounded $C^2$-domains $\O\subset \R^2$, the $v$-maximum
$$
N_q:=\max_{\overline{\O}} v_q \sim \log q \quad \text{as } q\to\infty
$$
in the \textit{special case} of the disk $\O=B_1(0)$.  In fact, the upper bound for $v$ is valid for any smooth bounded $\Omega$.

As before, we will omit writing the $q$-dependence of the solution $(u_q,v_q)$ of \eqref{LEbh} and its features for ease of notation. We shall assume that
\begin{equation}\label{eq:qk}
q \geq 2
\end{equation}
that is $\k=1$, so all constants $C,c$ (possibly with indices and primes), including those coming from the previous proof, will depend only on $\Omega$.  When we say an inequality is valid for all large $q$, or as $q\to\infty$, we mean that it holds for $q\ge q_0$, where $q_0$ depends only on $\O$. Once the result is established for $q\ge 2$, resp.\ for $q\geq q_0$, the estimates in Theorem \ref{biheq} for the remaining range $1+\k\le q<q_0$, $\k\in(0,1)$, follow from known results on a priori bounds for fixed $p,q$, or from Theorem \ref{main} with $K=q_0$.

Because of Theorem \ref{main}, we know that in the current problem we have
\begin{equation}\label{sec:BH:M}
M \leq C.
\end{equation}
As a first consequence of the uniform boundedness of $M$, we state a modified version of the Harnack estimates from Lemma \ref{lem:inhHarnack}.
\begin{lemma}\label{lem:inhHarnack2}
Assume \eqref{eq:qk} and let $(u,v)$ be a classical solution of \eqref{LEbh} in a bounded $C^2$-domain $\O\subseteq \R^2$. Let $M=\max_{\overline{\O}} u$ be attained at $x_u\in \O$ and $N=\max_{\overline{\O}} v$ at $x_v\in \O$. Fix $r>0$. Then for an absolute constant $c_0>0$ and a constant $C=C(\O)>0$, we have
\begin{equation}\label{lem:inhHarnack2:1}
M - u(x) \leq c_0 r \|v\|_{L^2(B_{2r}(x_u))} \leq Cr \quad\text{for all } x\in B_r(x_u),
\end{equation}
provided $B_{2r}(x_u)\subseteq \O$. Similarly,
\begin{equation}\label{lem:inhHarnack2:2}
N - v(x) \leq c_0 r M^{q/2}  \|u^{q/2}\|_{L^2(B_{2r}(x_v))} \leq C r M^{q/2} \quad\text{for all } x\in B_r(x_v),
\end{equation}
provided $B_{2r}(x_v)\subseteq \O.$
\end{lemma}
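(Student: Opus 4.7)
The strategy is to rerun the argument of Lemma \ref{lem:inhHarnack}, but apply the inhomogeneous Harnack inequality in its $L^2$-form rather than its $L^\infty$-form. The refinement is crucial here: since $p = 1$, integration by parts on \eqref{LEbh} yields
\[
\int_\Omega v^2 \, dx = \int_\Omega \nabla u \cdot \nabla v \, dx = \int_\Omega u^{q+1}\, dx \leq M \int_\Omega u^q \, dx \leq C,
\]
by Theorem \ref{main} and Proposition \ref{prop:int}; similarly, $\|u^{q/2}\|_{L^2(\Omega)}^2 = \int_\Omega u^q \leq C$. So the relevant $L^2$-norms of $v$ and $u^{q/2}$ over $\Omega$ remain universally bounded, even though the $L^\infty$-norms of $v$ and $u^q$ are expected to grow with $q$.

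For the first estimate I would set $w := M - u$. Then $w \geq 0$ in $B_{2r}(x_u)$, $w(x_u) = 0$, and $-\Delta w = v$. Applying the inhomogeneous Harnack inequality with $L^s$-bounded right-hand side (e.g.\ Theorem~8.20 in Gilbarg--Trudinger) with $s = 2 > n/2 = 1$, I get
\[
\sup_{B_r(x_u)} w \leq c_0 \bigl(\inf_{B_r(x_u)} w + r \|v\|_{L^2(B_{2r}(x_u))}\bigr) = c_0 r \|v\|_{L^2(B_{2r}(x_u))},
\]
since $w(x_u) = 0$ forces $\inf_{B_r(x_u)} w = 0$. The universal bound $\leq Cr$ follows by enlarging the $L^2$-ball to all of $\Omega$ and using the estimate from the first paragraph.

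For the second estimate I would set $w' := N - v$; then $w' \geq 0$, $w'(x_v) = 0$, and $-\Delta w' = -u^q$. The same Harnack inequality applies, because it only requires the \emph{solution} to be nonnegative, not a sign condition on the right-hand side (one can split $w'$ into its harmonic extension on $B_{2r}(x_v)$ and the Newton potential of $-u^q$ on that ball, and combine the homogeneous Harnack inequality for the former with the $L^q\to L^\infty$ elliptic estimate for the latter). This yields
\[
\sup_{B_r(x_v)}(N-v) \leq c_0 r \|u^q\|_{L^2(B_{2r}(x_v))},
\]
and the stated form is obtained via H\"older: $\|u^q\|_{L^2(B_{2r}(x_v))}^2 = \int_{B_{2r}(x_v)} u^{2q}\, dx \leq M^q \int_{B_{2r}(x_v)} u^q \, dx = M^q \|u^{q/2}\|_{L^2(B_{2r}(x_v))}^2$. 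The universal bound $\leq Cr M^{q/2}$ then follows from $\|u^{q/2}\|_{L^2(\Omega)}^2 \leq C$.

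The only technical subtlety is validating the $L^2$-form of the inhomogeneous Harnack inequality for solutions with variable-sign right-hand side, replacing the $L^\infty$-form used in Lemma \ref{lem:inhHarnack}; this is a mild and standard extension in elliptic regularity theory. Everything else is direct bookkeeping.
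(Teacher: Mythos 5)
Your proof is correct and takes essentially the same approach as the paper's: apply the inhomogeneous Harnack inequality with the $L^2$-norm of the right-hand side to the nonnegative functions $M-u$ and $N-v$ vanishing at their respective centers, extract the factor $M^{q/2}$ via the trivial bound $\int u^{2q}\leq M^q\int u^q$, and control the resulting $L^2$-norms using the integral estimates from Proposition~\ref{prop:int}. (The paper cites Theorem~4.17 in Han--Lin rather than Gilbarg--Trudinger, and does not spell out the remark about the sign of the right-hand side, but the content is identical.)
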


\noindent{\it Proof.} To derive these estimates we apply the inhomogeneous Harnack inequality (Theorem 4.17 in \cite{HanLin}) with the $L^2$-norm of the non-homogeneous term. For the solution $N-v\geq 0$ of $\D (N-v) = u^q$, satisfying $(N-v)(x_v)=0$, we get the bound
\[
\sup_{B_r(x_v)} (N-v) \leq c_0\Big(\inf_{B_r(x_u)} (N-v) + r \|u^q\|_{L^2(B_{2r}(x_v))}\Big) \leq c_0 r M^{q/2} \|u^{q/2}\|_{L^2(B_{2r}(x_v))}.
\]
Now \eqref{prop:int:eq2} of Proposition \ref{prop:int} allows us to estimate
\[
\sup_{B_r(x_v)} (N-v) \leq c_0 r M^{q/2} \|u^{q/2}\|_{L^2(B_{2r}(x_v))} \leq c_0 r M^{q/2} \left(\int_\O u^q \, dx \right)^{1/2} \leq Cr M^{q/2}.
\]
Analogously, for the solution $M-u\geq 0$ of $\D (M-u) = v$, satisfying $(M-u)(x_u)= 0$, we obtain
\[
\sup_{B_r(x_u)} (M-u) \leq c_0 r \|v\|_{L^2(B_{2r}(x_u))}.
\]
Now, the fact that
\[
\int_\O v^2 \, dx = \int_\O u^{q+1} \, dx \leq M \int_\O u^q \, dx \leq \tilde{C}
\]
allows us to conclude the proof, since
\[
\sup_{B_r(x_u)} (M-u) \leq c_0 r \|v\|_{L^2(B_{2r}(x_u))} \leq  c_0 r \tilde{C}^{1/2} \leq Cr.
\]

The second result we establish is that $M$ and $N$ are uniformly positive, and $\lim_{q\to\infty} M_q =1$.
\begin{prop}\label{prop:bb} Assume \eqref{eq:qk} and let $(u,v)$ be a classical solution of \eqref{LEbh} in a bounded $C^2$-domain $\O\subset \R^2$. Denote $M:=\max_{\overline{\O}} u$ and $N:=\max_{\overline{\O}} v$. There are constants $c_0,c>0$, such that
\begin{equation}\label{prop:bb:eq}
c\leq M \leq c_0 N.
\end{equation}
Furthermore, the following asymptotic bounds on $M=M_q$ hold:
\begin{equation}\label{prop:bb:asymp}
1 - \bar{c}/q \leq M_q \leq 1 + 4 \log q /q \quad \text{as }q\to \infty,
\end{equation}
for some $\bar{c}=\bar{c}(\O)>0$. In particular, $\lim_{q\to\infty} M_q =1.$
\end{prop}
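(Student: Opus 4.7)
My plan is to prove the proposition in three parts. The upper bound $M \leq c_0 N$ is immediate from Lemma \ref{lem:relativesizes} applied with $p=1$, so the substantive content is the uniform lower bound $M \geq c$ together with the two asymptotic refinements on $M_q$; the limit $\lim_q M_q = 1$ then follows by sandwich.

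For the uniform lower bound on $M$, I would test both equations in \eqref{LEbh} against the normalized first Dirichlet Laplacian eigenfunction $\phi$ (with $\int_\Omega \phi \, dx = 1$, as in Proposition \ref{prop:int}). Integration by parts, using $u = v = \phi = 0$ on $\partial\Omega$, gives
\[
\lambda \int_\Omega u\phi \, dx = \int_\Omega v \phi \, dx, \qquad \lambda \int_\Omega v\phi \, dx = \int_\Omega u^q \phi \, dx,
\]
which combine to $\lambda^2 \int_\Omega u\phi \, dx = \int_\Omega u^q \phi \, dx$. The elementary pointwise bound $u^q \leq M^{q-1} u$ then forces $M^{q-1} \geq \lambda^2$, i.e.\ $M \geq \lambda^{2/(q-1)}$. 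For $q \geq 2$ this is at least $\min(1,\lambda^2) > 0$, supplying the uniform constant $c$. Writing $\lambda^{2/(q-1)} = \exp\bigl(2\log\lambda/(q-1)\bigr)$ and using $\exp(x) \geq 1 + x$ yields $M \geq 1 + 2\log\lambda/(q-1) \geq 1 - \bar{c}/q$ for large $q$, with $\bar{c}$ depending only on $\Omega$.

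For the asymptotic upper bound I would iterate the scalar maximum principle against the torsion function $\tau$ of $\Omega$ (with $-\Delta \tau = 1$ in $\Omega$, $\tau = 0$ on $\partial\Omega$). From $-\Delta v = u^q \leq M^q$ and comparison, $v \leq M^q \tau$ and hence $N \leq M^q \|\tau\|_{L^\infty}$; the same argument applied to $-\Delta u = v \leq N$ gives $M \leq N \|\tau\|_{L^\infty}$. Chaining these produces $M^{q-1} \leq \|\tau\|_{L^\infty}^2$, so $M \leq \|\tau\|_{L^\infty}^{2/(q-1)} = \exp\bigl(2\log\|\tau\|_{L^\infty}/(q-1)\bigr)$. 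The inequality $\exp(x) \leq 1 + 2x$ for $0 \leq x \leq 1$ then yields $M \leq 1 + C(\Omega)/(q-1)$ for $q$ large, which is absorbed into the stated $1 + 4\log q/q$ (in fact this gives the strictly stronger $M \leq 1 + C/q$). Sandwiching the two asymptotic bounds forces $\lim M_q = 1$.

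I do not foresee a substantial obstacle. The only step that is not a direct application of standard scalar PDE tools is the Jensen-type manipulation: testing both equations against $\phi$ and combining the resulting identities with the trivial $u^q \leq M^{q-1} u$ is what converts the identity $\lambda^2 \int u\phi = \int u^q \phi$ into the desired lower bound on $M$. The upper bound is a straightforward iteration of maximum-principle comparison with the torsion function and first-order Taylor expansion.
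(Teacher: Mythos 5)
The first two parts of your argument are correct and match the paper exactly: the bound $M \le c_0 N$ is indeed Lemma \ref{lem:relativesizes} with $p=1$, and the eigenfunction test producing $\lambda^2 \int_\O u\phi = \int_\O u^q\phi$, hence $M^{q-1} \ge \lambda^2$, is precisely how the paper derives both the uniform lower bound $M \ge c$ and the asymptotic lower bound $M_q \ge 1 - \bar c/q$.

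Your proposed proof of the \emph{upper} asymptotic bound, however, contains a sign error that makes the argument collapse. From the torsion-function comparisons you correctly get $N \le M^q \|\tau\|_\infty$ and $M \le N\|\tau\|_\infty$, and chaining them gives $M \le M^q \|\tau\|_\infty^2$. But dividing through by $M^q$ yields $M^{1-q} \le \|\tau\|_\infty^2$, i.e.\ $M^{q-1} \ge \|\tau\|_\infty^{-2}$ --- a \emph{lower} bound on $M$, not the claimed $M^{q-1} \le \|\tau\|_\infty^2$. This is as it must be: comparison with the torsion function dominates the solution from above, so when $M$ is large the chain $M \le M^q K$ is satisfied trivially and cannot cap $M$. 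There is no way to get an upper bound on $M$ purely from the pointwise maximum principle; one needs a quantitative integral constraint.

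The paper's actual route to $M_q \le 1 + 4\log q/q$ is genuinely different: it uses the inhomogeneous Harnack estimate (Lemma \ref{lem:inhHarnack2}) at scale $\rho = M/q$ to show $u \ge M(1 - C/q)$ on $B_\rho(x_u)$, hence $u^q \ge e^{-2C}M^q$ there, and then plugs this into the uniform integral bound $\int_\O u^q\,dx \le C$ from Proposition \ref{prop:int} to get $C \ge c\, M^q\rho^2 = c\, M^{q+2}/q^2$. Combined with the already-established $M \ge c$, this forces $M^q \le Cq^2$, which is exactly what $M \le 1 + 4\log q/q$ encodes. You would need to replace your torsion-function iteration with this Harnack-plus-$L^1$ argument (or some other genuinely quantitative input) to close the gap.
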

\begin{proof}
Let $\l=\l(\O)$ be the first Dirichlet eigenvalue of $-\D$ in $\O$ and let $\phi>0$ be an associated eigenfunction. Multiplying both equations in \eqref{LEbh} by $\phi$ and integrating by parts, we get 
\[
\l^2 \int_\O u\phi \, dx = \l \int_\O v\phi  \, dx = \int_\O u^q \phi \, dx,
\]
so that $\int_\O (u^{q-1}- \l^2) u \phi \, dx  = 0$. Hence $M^{q-1}>\l^2$, meaning
\begin{equation}\label{prop:bb:Mlow}
M> \l^{2/(q-1)}  \geq \min\{1,\lambda^2\}  =c (\O)>0.
\end{equation}
The lower bound on $N\geq M/c_0\geq c/c_0$ is then precisely the $L^\infty$-estimate in Lemma \ref{lem:relativesizes} with $p=1$.

Let us now obtain the asymptotic behavior of $M$ as $q\to \infty$. The lower bound in \eqref{prop:bb:asymp} follows from \eqref{prop:bb:Mlow}:
\[
M\geq \exp \left(\frac{2 \log \l}{q-1}\right) \geq 1 + \frac{2 \log \l}{q-1} \geq 1- \frac{\bar{c}}{q} \quad \text{as } q\to \infty.
\]
In order to derive the upper asymptotic bound, we will first prove that
\begin{equation}\label{prop:bb:Mq}
M^q\leq C q^2 \quad \text{as } q\to \infty.
\end{equation}
Set $\rho:=M/q$. Since  $\rho\le C/q$ by \eqref{sec:BH:M} and $u$ achieves its maximum at a point $x_u\in \O$ which is at a  distance $d(x_u, \de \O)\geq \d_0(\O)>0$ away from $\de \O$ (according to Proposition \ref{prop:int}), the disk $B_{2\rho}(x_u)\subset \O$ for all sufficiently large $q$.
Because of the Harnack estimate \eqref{lem:inhHarnack2:1}, we see that for all large $q$,
\[
u^q(x) \geq (M- C\rho)^q = M^q (1- C/q)^q \geq  e^{-2C} M^q \quad \text{for all }  x \in B_\rho(x_u).
\]

Now the integral bound \eqref{prop:int:eq2} yields the desired
\begin{align*}
C\geq \int_\O u^q \, dx \geq \int_{B_{\rho}(x_u)} u^q \, dx \geq C' M^q |B_{\rho}(x_u)| =C'' M^q M^2/ q^2 \geq \tilde{C} M^q /q^2,
\end{align*}
where we used and the uniform bound  \eqref{prop:bb:Mlow}  from below $M\geq c$ to derive the last inequality. From \eqref{prop:bb:Mq} we then obtain
\[
M\leq \exp(\log(Cq^2)/q) \leq \exp (3 \log q/q)\leq 1 + 4 \log q/q \quad \text{as } q\to \infty.
\]
\end{proof}

We proceed with a key integral estimate.
\begin{prop}\label{prop:intbdsbelow} There exist constants $0<c<C$, depending only on $\O$, such that each of the quantities $\|u\|_{L^1(\O)}$, $\|u^q\|_{L^1(\O)}$, $\|v\|_{L^1(\O)}$, $\|v^2\|_{L^1(\O)}=\|u^{q+1}\|_{L^1(\O)}$ is bounded from below by $c$ and from above by $C$.
\end{prop}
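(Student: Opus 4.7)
The plan is to obtain the upper bounds nearly for free from already-established results and then to attack the four lower bounds in order of increasing delicacy. For the upper bounds: Theorem \ref{main} (applicable with $p=1$, $q\geq 2$, $K=1$) gives $M\leq C$, whence $\|u\|_{L^1(\O)}\leq M|\O|\leq C$; the bounds $\|u^q\|_{L^1(\O)}\leq C$ and $\|v\|_{L^1(\O)}=\|v^p\|_{L^1(\O)}\leq C$ are \eqref{prop:int:eq1}--\eqref{prop:int:eq2} of Proposition \ref{prop:int}; and the identity $\|v^2\|_{L^1(\O)}=\|u^{q+1}\|_{L^1(\O)}$ follows from integration by parts, $\int v(-\D u)\,dx = \int \nabla u\cdot\nabla v\,dx = \int u(-\D v)\,dx$, with common value bounded by $M\|u^q\|_{L^1(\O)}\leq C$.

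For the lower bounds I would proceed in a chain, each step feeding the next. First, for $\|u\|_{L^1(\O)}$: Proposition \ref{prop:bb} gives $M\geq c_1>0$, and since $d(x_u,\de\O)\geq\d_0>0$ by Proposition \ref{prop:int}, the estimate \eqref{lem:inhHarnack2:1} yields $M-u\leq C_2 r$ on $B_r(x_u)$ for every $r\leq \d_0/2$; choosing $r_0:=\min(c_1/(2C_2),\d_0/2)$ (which depends only on $\O$) forces $u\geq c_1/2$ on $B_{r_0}(x_u)$, so $\|u\|_{L^1(\O)}\geq (c_1/2)\pi r_0^2=:c_u>0$. Next, for $\|v\|_{L^1(\O)}$, I would integrate the equation $-\D u=v$ over $\O$ and use the divergence theorem together with $u|_{\de\O}=0$ and $u>0$ in $\O$ (which forces $u_\nu\leq 0$) to obtain $\|v\|_{L^1(\O)}=-\int_{\de\O} u_\nu\,ds$. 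The crucial input is then the global Harnack inequality (Theorem \ref{thm:GHI}), applied to the nonnegative superharmonic $u$ with exponent $t=1<n/(n-1)=2$: it gives $\inf_\O u/d\geq C\|u\|_{L^1(\O)}\geq Cc_u$, and taking limits along the inward normal at each boundary point yields $-u_\nu\geq Cc_u$ pointwise on $\de\O$. Integrating, $\|v\|_{L^1(\O)}\geq Cc_u|\de\O|=:c_v>0$. Running the same argument with $v$ in place of $u$ and the equation $-\D v=u^q$, now plugging in $\|v\|_{L^1(\O)}\geq c_v$, yields $\|u^q\|_{L^1(\O)}\geq c_{uq}>0$.

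Finally, for $\|v^2\|_{L^1(\O)}=\|u^{q+1}\|_{L^1(\O)}$, Cauchy--Schwarz gives $\|v\|_{L^1(\O)}^2\leq |\O|\,\|v^2\|_{L^1(\O)}$, so $\|v^2\|_{L^1(\O)}\geq c_v^2/|\O|>0$. I expect the delicate step to be the lower bound on $\|v\|_{L^1(\O)}$: a pointwise Harnack estimate concentrated near $x_v$ would only yield bounds decaying like $1/q^2$, because the factor $M^{q/2}\sim q$ in \eqref{lem:inhHarnack2:2} forces the ball on which $v$ remains comparable to $N$ to shrink like $1/q$. The global Harnack inequality from \cite{sirakov2022global} circumvents this by converting the uniform $L^1$ lower bound on $u$ in the bulk directly into a quantitative lower bound on the normal flux $-u_\nu$ at the boundary.
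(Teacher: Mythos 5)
Your proposal is correct and follows essentially the same route as the paper: the lower bound on $\|u\|_{L^1}$ comes from the Harnack estimate \eqref{lem:inhHarnack2:1} together with $M\geq c$ and the interior location of $x_u$, the lower bounds on $\|v\|_{L^1}$ and $\|u^q\|_{L^1}$ are propagated via the divergence theorem and the global Harnack inequality (Theorem \ref{thm:GHI}) applied to $u$ and then $v$, and $\|v^2\|_{L^1}$ is handled by Cauchy--Schwarz. The paper organizes the first two ingredients as a ``comparability'' statement $\|u\|_{L^1}\sim\|v\|_{L^1}\sim\|u^q\|_{L^1}$ before anchoring it with the lower bound on $\|u\|_{L^1}$, whereas you run the chain sequentially; this is a cosmetic difference only.
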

\begin{remark}
Proposition \ref{prop:intbdsbelow} implies that $q\int_{\O} u^{q+1} \, dx \to \infty$ when $p=1$ in \eqref{LEsys}. This is in stark contrast to the regime $p\sim q$, in which the corresponding integral stays bounded (in star-shaped domains), according to Theorem \ref{coro_IntgBnd}.
\end{remark}

\begin{proof}
Let us first show that the quantities
\begin{equation}\label{prop:intbdsbelow:0}
\|u\|_{L^1(\O)} \sim \|u^q\|_{L^1(\O)} \sim \|v\|_{L^1(\O)} 
\end{equation}
are comparable with constants depending only on $\O$. Denote by $\phi$ the $L^1$-normalized first Dirichlet eigenfunction of $-\D$ in $\O$.  We have by Proposition \ref{prop:int}
\begin{align}\label{prop:intbdsbelow:1}
& \|u\|_{L^1} \gtrsim \|u\phi \|_{L^1} \gtrsim \|v\|_{L^1} \quad \text{and}\quad \|v\|_{L^1} \gtrsim \|v\phi\|_{L^1} \gtrsim \|u^q\|_{L^1}.
\end{align}
On the other hand, by the Divergence Theorem and the global Harnack inequality of Theorem~\ref{thm:GHI} applied to the superharmonic $u$ and $v$, we have
\begin{equation}\label{prop:intbdsbelow:2}
\begin{array}{c}
\|u^q\|_{L^1} = \int_{\O} (-\D v)\, dx = \int_{\de \O} (-v_\nu) \, ds \geq c \inf_{\de \O} (-v_{\nu}) \geq c \|v\|_{L^1}, \\ \\
\|v\|_{L^1} = \int_{\O} (-\D u)\, dx = \int_{\de \O} (-u_\nu) \, ds \geq c \inf_{\de \O} (-u_{\nu}) \geq c \|u\|_{L^1}.
\end{array}
\end{equation}
We see that estimates \eqref{prop:intbdsbelow:1} and \eqref{prop:intbdsbelow:2} entail  \eqref{prop:intbdsbelow:0}. The uniform upper bound on these three quantities follows from Proposition \ref{prop:int}.

To establish the uniform lower bound on all three, it suffices to show that $\|u\|_{L^1}\geq c$. Note that the Harnack  bound \eqref{lem:inhHarnack2:1} of Lemma \ref{lem:inhHarnack2} says that
\begin{equation}\label{prop:intbdsbelow:harnack}
u(x) \geq M - c' r \quad \text{for } x\in B_{r}(x_u), \quad \text{provided } B_{2r}(x_u)\subset \O,
\end{equation}
where $x_u$ is a point of maximum of $u$. Taking into consideration the uniform lower bound \eqref{prop:bb:eq} on $M$ and the fact that $d(x_u,\de \O) \geq \d_0(\O)>0$ (Proposition \ref{prop:int}), we see that \eqref{prop:intbdsbelow:harnack} yields a neighborhood $B_{\bar{c}}(x_u)\subset \O$ where
\begin{equation*}
u(x) \geq M/2 \geq c_0 \quad \text{for all } x\in B_{\bar{c}}(x_u).
\end{equation*}
Thus,
$$\|u\|_{L^1} \geq \int_{B_c(x_u)} u \, dx \geq c_0 |B_{\bar{c}}(x_u)|\geq c.$$
Finally, to show that the remaining quantity $\|v^2\|_{L^1}=\|u^{q+1}\|_{L^1}$ is of unit size, we note that by Proposition \ref{prop:int} and \eqref{sec:BH:M}
\begin{align*}
\|u^{q+1}\|_{L^1} & \leq M \|u^q\|_{L^1} \leq C, \\
\|v^2\|_{L^1} & \geq |\O|^{-1} \|v\|_{L^1}^2 \geq c,
\end{align*}
where we used the Cauchy-Schwarz inequality to derive the second estimate above.
\end{proof}

We are now in a position to establish the logarithmic growth of $\max_{\overline{\O}} v_q$ as $q\to\infty$.
\begin{theorem}\label{thm:v_blowup} Let $(u_q,v_q)$ be a classical solution of \eqref{LEbh} in $\O$. Then
\begin{equation}\label{thm:v_blowup:asymp1}
 \max_{\overline{B_1}} v_q \leq \bar{C} \log q \quad \text{as } q\to \infty.
\end{equation}
while if $\O=B_1(0)\subset \R^2$ then for some absolute constant $\bar{c}>0$
\begin{equation}\label{thm:v_blowup:asymp2}
\max_{\overline{B_1}} v_q\geq \bar{c} \log q \quad \text{as } q\to \infty.
\end{equation}
\end{theorem}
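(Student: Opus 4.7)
The proof of Theorem~\ref{thm:v_blowup} splits into the upper bound~\eqref{thm:v_blowup:asymp1} (valid for any $C^2$ bounded domain) and the lower bound~\eqref{thm:v_blowup:asymp2} (which exploits radial symmetry in the disk); these use rather different ingredients so I would argue them separately.

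For the upper bound $\|v_q\|_\infty \le \bar C\log q$, the plan is to use the Green representation $v(x)=\int_\Omega G_\Omega(x,y) u^q(y)\,dy$ together with $M\le 1+4\log q/q$ from Proposition~\ref{prop:bb} and $\|u^q\|_{L^1(\Omega)}\le C$ from Proposition~\ref{prop:int}. Writing $G_\Omega(x,y)\le C_\Omega+\tfrac{1}{2\pi}\log(1/|x-y|)$ and splitting the logarithmic integral at the scale $\delta:=M^{-q/2}$, both the ``far'' piece (bounded by $\log(1/\delta)\|u^q\|_{L^1}\lesssim q\log M\lesssim \log q$) and the ``near'' piece (bounded by $M^q\cdot C\delta^2\log(1/\delta)=Cq\log M\lesssim \log q$) are controlled. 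The degenerate case $M\le 1$, if it occurs, is handled directly since then $\|v\|_\infty\le C\|u^q\|_\infty\le C$.

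For the lower bound on the disk, the positive solution $(u,v)$ is radial by the moving-planes method for cooperative systems, and integration of the radial equation $(rv')'=-ru^q$ combined with Fubini yields the explicit formula
\[
v(r) = \log(1/r)\,g(r)+\int_r^1 s\,u^q(s)\log(1/s)\,ds,\qquad g(r):=\int_0^r s\,u^q(s)\,ds,
\]
with $g(1)=m:=\|u^q\|_{L^1}/(2\pi)\in[c_0,C_0]$ uniformly in $q$ by Propositions~\ref{prop:int} and~\ref{prop:intbdsbelow}. Since both summands are nonnegative, $v(r)\ge\log(1/r)\,g(r)$. The crux is the \emph{concentration estimate}
\[
g(K/\sqrt q)\ge m/2\qquad\text{for some absolute constant }K,
\]
after which
\[
N=v(0)\ge v(K/\sqrt q)\ge \log(\sqrt q/K)\cdot g(K/\sqrt q)\ge \tfrac{m}{2}\bigl(\tfrac{1}{2}\log q-\log K\bigr)\ge \bar c\log q
\]
for all $q$ large.

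The main obstacle is this concentration estimate, which I would prove in three substeps. First, the global Harnack inequality (Theorem~\ref{thm:GHI}) applied to the nonnegative superharmonic $v$, together with $\|v\|_{L^1}\ge c_0>0$ from Proposition~\ref{prop:intbdsbelow}, yields $v(r)\ge c(1-r)$, hence $v\ge c/2$ on $[0,1/2]$. Second, since $v'(r)=-\tfrac{1}{r}\int_0^r s\,u^q\,ds\le 0$, $v$ is radially decreasing, so integrating the radial ODE for $u$ twice gives $M-u(r)\ge v(r)r^2/4$; combined with step one, $u(r)\le M-cr^2/16$ on $[0,1/2]$, whence the Gaussian-type pointwise bound $u^q(r)\le M^q\exp(-cqr^2/(16M))$. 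Third, integrating this bound yields $\int_\Omega u^q\,dx\le C'M^{q+1}/q+o(1)$, which combined with the two-sided $L^1$ estimates $\|u^q\|_{L^1}\asymp 1$ pins down $M^{q+1}\asymp q$ and in particular $M\le 2$ for large $q$. Returning to the Gaussian tail, $\int_{K/\sqrt q}^1 s\,u^q(s)\,ds\le C''\exp(-cK^2/(16M))+o(1)\le m/4$ once $K$ is chosen large enough, giving $g(K/\sqrt q)\ge m-m/4\ge m/2$. The decisive input is the global Harnack bound from \cite{sirakov2022global}: the uniform interior lower bound on $v$ is what forces the quadratic decay of $u$ from its maximum and thereby localizes $u^q$ on a ball of radius $\sim 1/\sqrt q$.
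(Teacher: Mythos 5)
Your upper bound argument is correct but genuinely different from the paper's. Instead of the Brezis--Merle exponential $L^1$ estimate (Lemma~\ref{lem:BM}) combined with the pointwise Harnack bound near $x_v$, you split the Green integral at scale $\delta = M^{-q/2}$ and use $\|u^q\|_{L^1}\le C$ on the far piece and $u^q\le M^q$ on the near piece; both pieces come out $\lesssim q\log M \lesssim \log q$ thanks to $M^q\le Cq^2$ from Proposition~\ref{prop:bb}. This is a cleaner and more elementary route to \eqref{thm:v_blowup:asymp1}, whereas the paper's Brezis--Merle approach imports more machinery but delivers the $L^1$-to-exponential-integrability estimate as a black box. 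Either way the essential input is the same, namely $M^q\le Cq^2$.

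Your lower-bound strategy via the explicit radial formula $v(r)=\log(1/r)\,g(r)+\int_r^1 s\,u^q(s)\log(1/s)\,ds$ and the concentration of $g$ is also genuinely different from the paper, which runs the concentration argument through the mass-concentration set $S=\{u\ge 1-L/q\}$, the crude comparison $M^q/N^2\ge cq^{1/2}$, and a dichotomy on $I(R)$ for the adapted scale $R=AN/M^{q/2}$. Your Steps 1 and 2 (global Harnack giving $v\ge c/2$ on $B_{1/2}$, hence $M-u(r)\ge v(r)r^2/4\ge cr^2/8$, hence the Gaussian tail $u^q(r)\le M^q e^{-cqr^2/(16M)}$) are correct and essentially parallel the paper's Step~2. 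However, \textbf{there is a gap in your Step 3: the claim $M^{q+1}\asymp q$ is not established.} Your argument only gives the one-sided bound: from $c\le\|u^q\|_{L^1}\le C M^{q+1}/q+o(1)$ you get $M^{q+1}\ge cq$, but \emph{not} $M^{q+1}\le Cq$. The available upper bound is only $M^q\le Cq^2$ (Proposition~\ref{prop:bb}); even the matching lower bound $u(r)\ge M-Nr^2/4$ only yields $\|u^q\|_{L^1}\gtrsim M^{q+1}/(qN)$, hence $M^{q+1}\lesssim qN\lesssim q\log q$, which is weaker than $\asymp q$. This matters because in your final tail estimate
\[
\int_{K/\sqrt q}^{1/2}s\,u^q(s)\,ds\le \frac{8M^{q+1}}{cq}\,e^{-cK^2/(16M)}
\]
the prefactor $M^{q+1}/q$ is \emph{not} uniformly bounded by the estimates you have, so no absolute constant $K$ makes the right side $\le m/4$. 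The fix is simple and does not require the false $M^{q+1}\asymp q$: evaluate $g$ at a slightly larger scale, say $r_0=q^{-1/4}$ (or $r_0=K\sqrt{\log q}/\sqrt q$). Then the tail becomes $\lesssim (M^{q+1}/q)e^{-cq^{1/2}/(16M)}\lesssim q\,e^{-c'\sqrt q}\to 0$ using only $M^{q+1}/q\lesssim q$ and $M\le 2$, giving $g(r_0)\ge m/2$ for large $q$, and $\log(1/r_0)=\tfrac14\log q$ still yields $N\ge v(r_0)\ge \tfrac{m}{8}\log q$. With this repair your argument is sound, and I would say it is somewhat shorter and more transparent than the paper's Steps 3--4, which carefully track the scale $R=AN/M^{q/2}$ instead of committing to a fixed power of $q$.
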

\begin{proof}
 We will first show that the upper asymptotic bound in \eqref{thm:v_blowup:asymp1} holds true for any bounded $C^2$-domain $\O\subset \R^2$. For this purpose, we shall make use of the $L^1$ estimate of Brezis-Merle \cite{brezis1991uniform} for the Poisson equation in two dimensions, which we state now.

\begin{lemma}[\cite{brezis1991uniform}] \label{lem:BM}
Assume $\O\subset \R^2$ is a bounded domain and let $u$ be a solution of
\[
-\D u = f  \text{ in } \O, \quad
u = 0  \text{ on } \de \O,
\]
with $f\in L^1(\O)$. Then for every $\d\in (0,4\pi)$ we have
\[
\int_\O \exp \left(\frac{(4\pi - \d) |u|}{\|f\|_{L^1(\O)}}\right) \, dx \leq \frac{4\pi^2}{\d} (\text{diam } \O)^2.
\]
\end{lemma}
Applying Lemma \ref{lem:BM} to $-\D v = u^q$ in $\O\subset \R^2$, with $\d = 2\pi$, we get
\begin{equation}\label{thm:v_blowup:BM1}
C= 2\pi (\text{diam }\O)^2 \geq \int_\O \exp (2 \pi v/ \|u^q\|_{L^1(\O)}) \, dx \geq \int_\O e^{c v} \, dx,
\end{equation}
since $\|u^q\|_{L^1(\O)}\leq C$. However, by the $v$-estimate in Lemma \ref{lem:inhHarnack} we have near the point of maximum $x_v$ of $v$, by  \eqref{main:vest} with $p=1$,
\begin{equation}\label{thm:v_blowup:BM2}
v(x) \geq N - N/4 = 3N/4 \quad \text{for all } x \in B_{c_0 \rho}(x_v)\subset \O, \text{ with } \rho := (N/M^q)^{1/2}.
\end{equation}
Combining \eqref{thm:v_blowup:BM1}, \eqref{thm:v_blowup:BM2} and the fact that $N\geq c$ by Proposition \ref{prop:bb}, we obtain
\begin{equation}\label{thm:v_blowup:Mqexpo}
C\geq \int_\O e^{c v} \, dx \geq \int_{B_{c_0\rho}(x_v)} e^{c v} \, dx \geq e^{\tilde{c} N} |B_{c_0\rho}|  = c' e^{\tilde{c} N} N/M^q \geq c'' e^{\tilde{c}N}/M^q.
\end{equation}
But we know from \eqref{prop:bb:Mq} that $M^q \leq C q^2$ as $q\to \infty$. Hence, $e^{\tilde{c} N} \leq \tilde{C}q^2$ and we can conclude \begin{equation}\label{boundN}N\leq \bar{c} \log q \quad \text{as } q\to \infty.\end{equation}\medskip

We now turn our attention to proving the logarithmic growth of $N_q$ in \eqref{thm:v_blowup:asymp2}.
From here on we assume that $\O=B_1(0)$. Because of the radial symmetry of the domain, the moving planes method yields that $u(x)$ and $v(x)$ are radially symmetric and monotonically decreasing along the radius, so that
\[
M:=\max_{\overline{B_1(0)}} u = u(0) \quad \text{and} \quad N:= \max_{\overline{B_1(0)}} v = v(0).
\]
In particular, \emph{the points of maximum of both solution components $u$ and $v$ coincide}: a significant feature which will be exploited in the upcoming arguments. 

\bigskip

\noindent\emph{Step 1.} We claim that there exists a large  constant $L>0$ and a constant $c_0>0$ such that
\begin{equation}\label{thm:v_blowup:massconc}
\int_{\{x\in \O: u(x)\geq 1- L/q\}} u^q \, dx \geq c_0\quad \text{for all large } q.
\end{equation}
Indeed, when $u(x)< 1-L/q$, $L\geq1$, we have $u^q(x) \leq e^{-L}$, so that
$
\int_{\{u< 1- L/q\}} u^q \, dx \leq e^{-L} |\O|.
$
Since by Proposition \ref{prop:intbdsbelow} we know that $\int_\O u^q \, dx \geq c$, we see that
\[
\int_{\{u\geq 1- L/q\}} u^q \, dx \geq c - e^{-L} |\O| \geq c/2,
\]
if we choose $L = \max(1, \log (2|\O|/c))$.

\bigskip

\noindent \emph{Step 2}. Denote  
$$
I(r):=\int_{B_{2r}} u^q\, dx, \quad r\in(0,1/2].
$$
Fix any $R\in (0,1/2]$. We will show that in $B_R$, $u$ \emph{decreases quadratically} away from its point of maximum, according to
\begin{equation}\label{thm:v_blowup:quadgrowth}
M- u (x) \geq  N |x|^2/4 - C M^{q/2} \sqrt{I(R)} |x|^3 \quad \text{for all } x\in \overline{B_R}.
\end{equation}
Indeed, since the origin is the point of maximum of $v$, we see by the $v$-estimate \eqref{lem:inhHarnack2:2} of Lemma \ref{lem:inhHarnack2}, applied at every $x$ with $r=|x|\in (0,R]$, that
\[
v(x)  \geq N  - c_0 r M^{q/2}\|u^{q/2}\|_{L^2(B_{2r})}= N- c_0 M^{q/2} \sqrt{I(R)}|x|  \quad \text{for all } x\in \overline{B_R}.
\]
Thus, if
$$
\tilde{u}_0(x) := N |x|^2 /4 - c_0 M^{q/2} \sqrt{I(R)} |x|^3 /9,$$
$$
w:=M- u - \tilde{u}_0,$$
then the calculation
\[
\D w (x) = v(x) - (N- c_0 M^{q/2} \sqrt{I(R)}|x|) \geq 0  \quad \text{for all } x\in B_R,
\]
shows that $w$ is subharmonic in $B_R$. Hence, radial symmetry and the mean value property yield for each $x$ with $r=|x|\in (0,R]$:
\[
M-u(x) - \tilde{u}_0(x) = w(x) = \frac{1}{2\pi} \int_{\de B_r} w \, ds \geq w(0) = 0,
\]
whence $M-u\geq \tilde{u}_0$ in $B_R$, which is precisely \eqref{thm:v_blowup:quadgrowth}.

\bigskip

\noindent \emph{Step 3}. In this step we will show that
\begin{equation}\label{thm:v_blowup:Mqinf}
\frac{M^q}{N^2} \geq c q^{1/2}.
\end{equation}

 First, we observe that if  $x\in S:=\{x\::\:u(x) \geq 1- L/q\}$, where $L$ is the constant from Step 1, then by \eqref{prop:bb:asymp}
\begin{equation}\label{thm:v_blowup:1}
M-u(x)\le 4\frac{\log q}{q}+\frac{L}{q}.
\end{equation}
On the other hand, estimate \eqref{thm:v_blowup:quadgrowth} for $R=1/2$ implies that for all $x\in \overline{B_{1/2}}$,
$$
M-u(x)  \geq  |x|^2 (N/4 - CM^{q/2} \sqrt{I(1/2)} |x|) \geq  |x|^2 (N/4 - CM^{q/2}|x|),
$$
since $ I(1/2) = \int_{B_1} u^q \leq C$.
Hence for all $x\in S$ such that $|x|\leq N/(8CM^{q/2})$ we have by~\eqref{prop:bb:eq}
\begin{equation}\label{thm:v_blowup:2}
c_0 |x|^2 \le (N/8) |x|^2\le 4\log q/q+L/q
\end{equation}
 Assume for contradiction that $N/M^{q/2}\geq A q^{-1/4}$, where $A>0$ is picked so that the set
 $$
 \left(\frac{4\log q}{c_0 q} + \frac{L}{c_0 q}\right)^{1/2} < |x| < \frac{A}{2Cq^{1/4}} \left( \leq \frac{N}{2CM^{q/2}}\right)
 $$
 is not empty for all $q\geq 2$. However, by \eqref{thm:v_blowup:2}  no point of this set can be in $S$. Since $u$ is radially decreasing, this means that $S\subset B_\rho$, where $\rho =((4/c_0)\log q/q + L/(c_0q))^{1/2}$. However, that and the logarithmic bound \eqref{boundN} for $N$ entail
\begin{align*}
\int_{\{u\geq 1- L/q\}} u^q \, dx  & \leq \int_{B_{\rho}} u^q \, dx \leq M^q |B_{\rho}| = C_1 \frac{M^q}{N^2} N^2\rho^2 \\
& \leq C A^{-2} q^{1/2} (\log q)^2 \left(\frac{4\log q}{c_0 q} + \frac{L}{c_0 q}\right)  \to 0 \quad \text{as }q\to\infty,
\end{align*}
which contradicts the mass concentration estimate~\eqref{thm:v_blowup:massconc}.

\bigskip

\noindent \emph{Step 4.} In this final step we will establish the precise, logarithmic growth of $N_q$.
According to \eqref{thm:v_blowup:Mqinf} from the previous step, we know that
\begin{equation}\label{thm:v_blowup:Rdef}
R:= A N/M^{q/2}  \leq C q^{-1/2}  \quad \text{as } q\to \infty,
\end{equation}
where $A>0$ is a large absolute constant to be picked later in the argument. Now, if one has $I(R)> 1/A^4$, then Green's representation formula (see \eqref{main:Green_u}) yields
\begin{align*}
 N + C'= v(0)+C' &\geq  \int_{B_1} \frac{1}{2\pi} \log \frac{1}{|y|} u^q(y) \, dy \geq \frac{1}{2\pi} \log\frac{1}{2R} \int_{B_{2R}} u^q \, dy  \\
& = \frac{1}{2\pi} \log\left(\frac{1}{2R}\right) I(R) \geq \frac{1}{2\pi A^4} \log\left(\frac{1}{2R}\right)  \geq c\log (q)    \quad \text{as } q\to \infty,
\end{align*}
due to \eqref{thm:v_blowup:Rdef},  so $ N\geq c\log (q) - C' \geq (c/2) \log (q)$ as $q\to\infty$, and we are done.

Therefore, we may assume that $I(R)\leq 1/A^4$.
Then the quadratic decrease estimate \eqref{thm:v_blowup:quadgrowth} of Step 2 yields
\begin{align}\label{thm:v_blowup:quaddec}
M- u (x) & \geq N |x|^2/4 - C M^{q/2} A^{-2} |x|^3  \geq |x|^2 (N/4 - C M^{q/2} A^{-2} R) \notag \\
& \geq N|x|^2 (1/4 - C/A) \geq N|x|^2/8 \quad \text{for all }x\in \overline{B_R},
\end{align}
provided $A\geq 8C$. In that case, because of the radial monotonicity of $u$, we  have that
\begin{equation}\label{thm:v_blowup:Raway}
M-u(x) \geq  N|R|^2/8  = c A^2 N^3/M^q \quad \text{for all } x\in B_1\setminus \overline{B_R}.
\end{equation}
In order to estimate the right-hand side, we will use the bound \eqref{main:ineq_v} from the proof of Theorem \ref{main}. It states that for some absolute constants $c_1, c_2, c_3$ and all large $q$
\begin{equation}\label{thm:v_blowup:ineq_N}
\frac{(N+c_1)N}{M^{q+1}} \geq \frac{c_2}{q} \log \left(c_3 \frac{q N}{M}\right).
\end{equation}
By Proposition \ref{prop:bb} we know that $N\geq \tilde{c}_1$ and $1/2\leq M \leq 2$ for all large $q$. Hence \eqref{thm:v_blowup:ineq_N} entails that, for all large $q$, the right-hand side of \eqref{thm:v_blowup:Raway} is bounded from below by
\begin{align*}
c A^2 \frac{N^3}{M^q}  & \geq c \tilde {c}_1 A^2 \frac{N^2}{M^q}  \geq \tilde{c} A^2 \frac{(N+c_1)N}{M^{q}} \geq \tilde{c} A^2 \frac{c_2 M}{q} \log \left(c_3 \frac{qN}{M}\right) \\
& \geq \frac{c_0  A^2}{q} \log (c_0' q) \geq 8 \frac{\log q}{q}, \quad \text{provided} \quad c_0 A^2 \geq 9.
\end{align*}
Hence, choosing
\[
A = \max((9/c_0)^{1/2}, 8C)
\]
we can conclude from \eqref{thm:v_blowup:Raway} that
\begin{equation}\label{thm:v_blowup:estu}
M- u(x) \geq 8 \log q /q \quad \text{whenever } x\in B_1\setminus \overline{B_R}, \quad \text{as }q\to \infty.
\end{equation}
But for all large $q$, by using \eqref{prop:bb:asymp} in Proposition \ref{prop:bb},
\[
S:=\{u\geq 1- \frac{L}{q}\} = \{M-u \leq M-1+\frac{L}{q}\} \subseteq \{M-u\leq \frac{4 \log q + L}{q} \} \subseteq \{M-u< \frac{8 \log q}{q} \},
\]
so that \eqref{thm:v_blowup:estu} implies that $S \subseteq  \overline{B_R}$. By this and Step 1 we infer that
\[
\int_{B_{R}} u^q \, dx \geq \int_{S} u^q \, dx \geq c_0,
\]
so that another application of Green's formula yields once again
\[
 N +C' = v(0) +C'\geq \frac{1}{2\pi} \log \left(\frac{1}{R}\right) \int_{B_R} u^q \, dx \geq \frac{c_0}{2\pi} \log \left(\frac{1}{R}\right)  \geq c\log (q)   \quad \text{as } q\to\infty.
\]

Theorem \ref{biheq} is proved.
\end{proof}

\bigskip

In the end we display a Maple-generated plot of the maxima of $u$ and $v$, and their asymptotics, for the biharmonic Lane-Emden equation $-\Delta u =v$, $-\Delta v= u^q$, in the unit disk, with Navier boundary conditions.

\begin{figure}[h]

\begin{subfigure}{0.45\textwidth}
\includegraphics[width=0.9\linewidth, height=6cm]{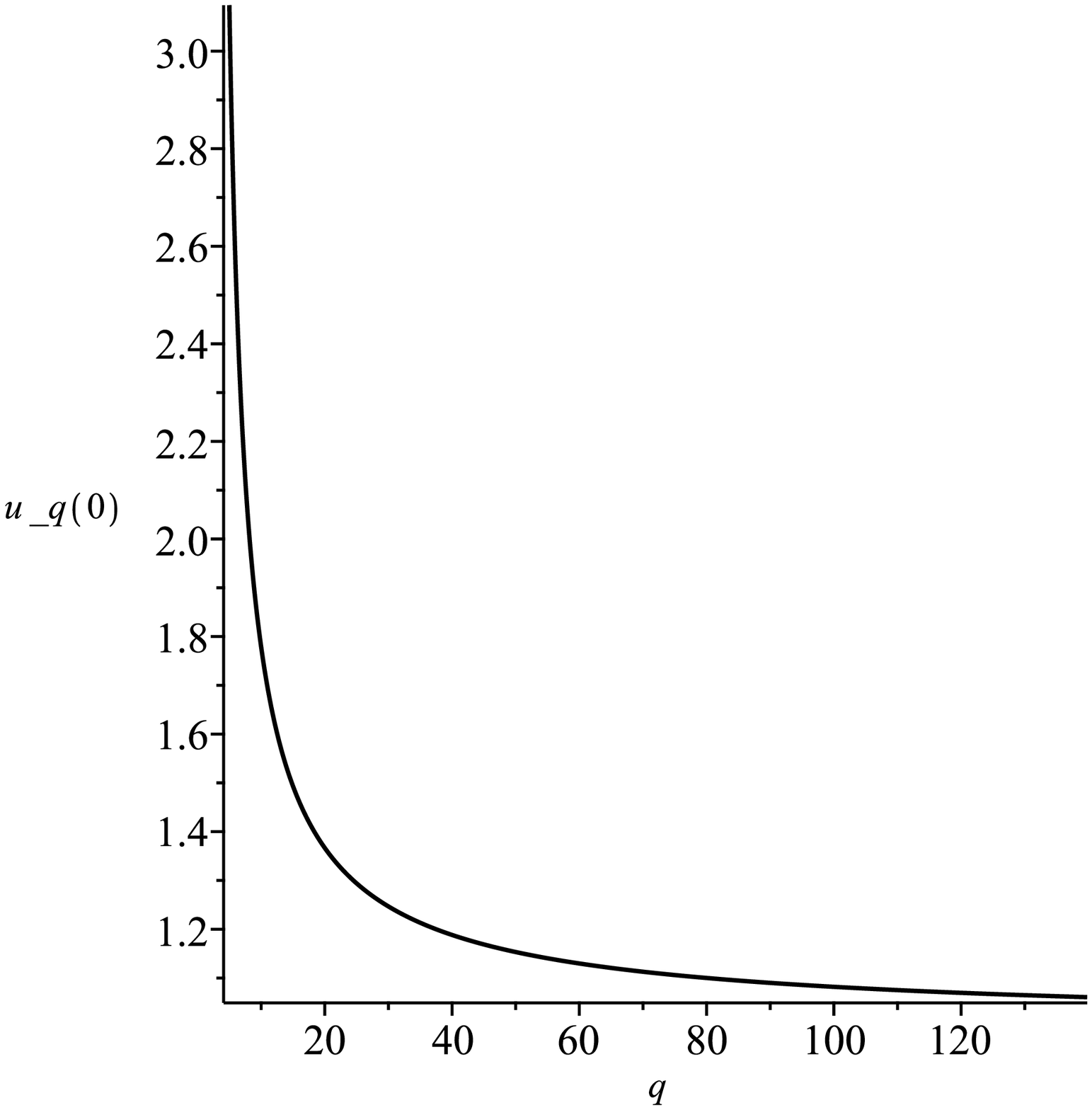}
\caption{$\max(u_q)$ vs. $q$}
\label{fig:subim1}
\end{subfigure}
\begin{subfigure}{0.45\textwidth}
\includegraphics[width=0.9\linewidth, height=6cm]{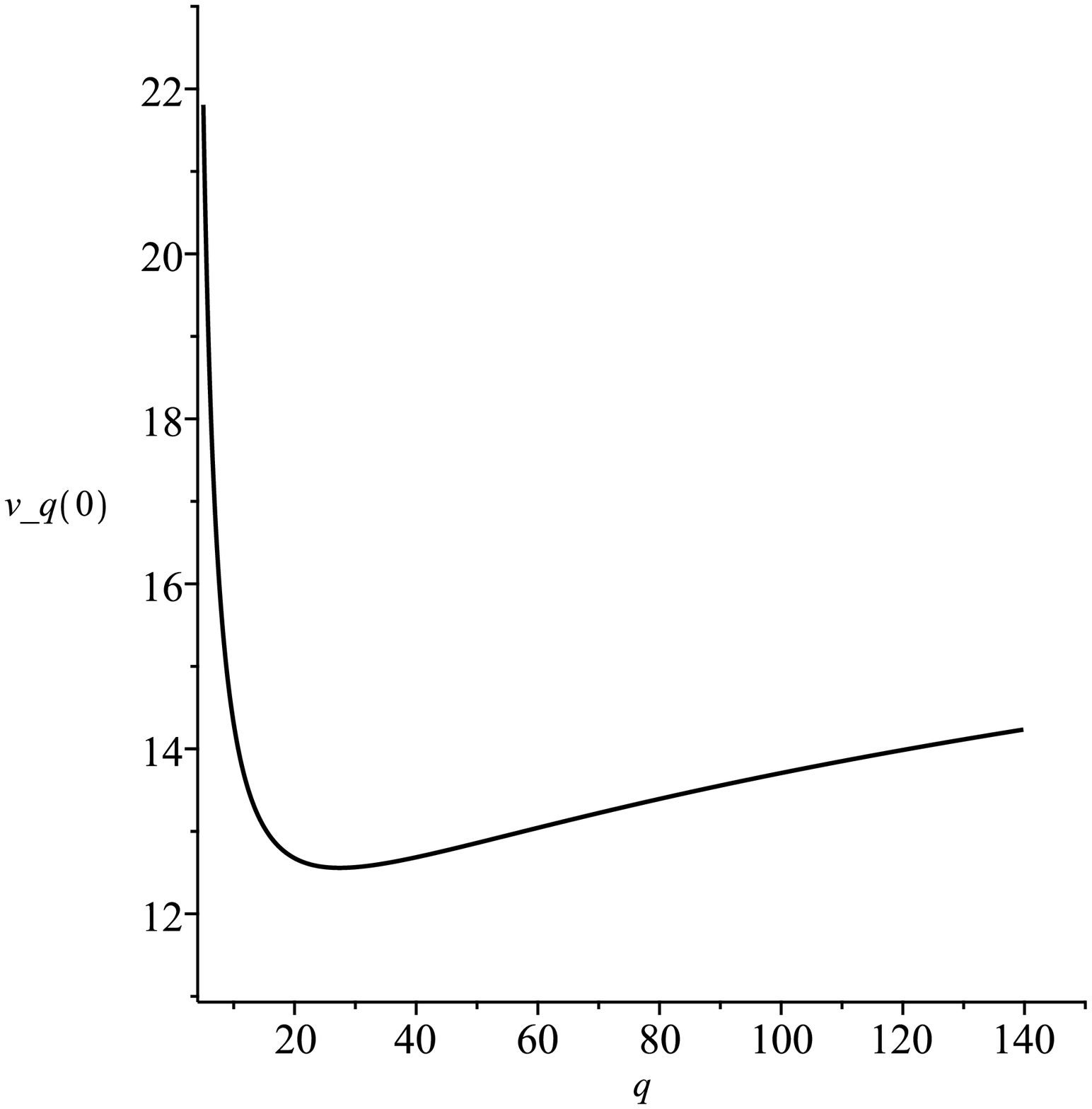}
\caption{$\max(v_q)$ vs. $q$}
\label{fig:subim2}
\end{subfigure}

\caption{Maxima of the solution components}
\label{fig:image1}
\end{figure}

\begin{figure}[h]

\begin{subfigure}{0.45\textwidth}
\includegraphics[width=0.9\linewidth, height=6cm]{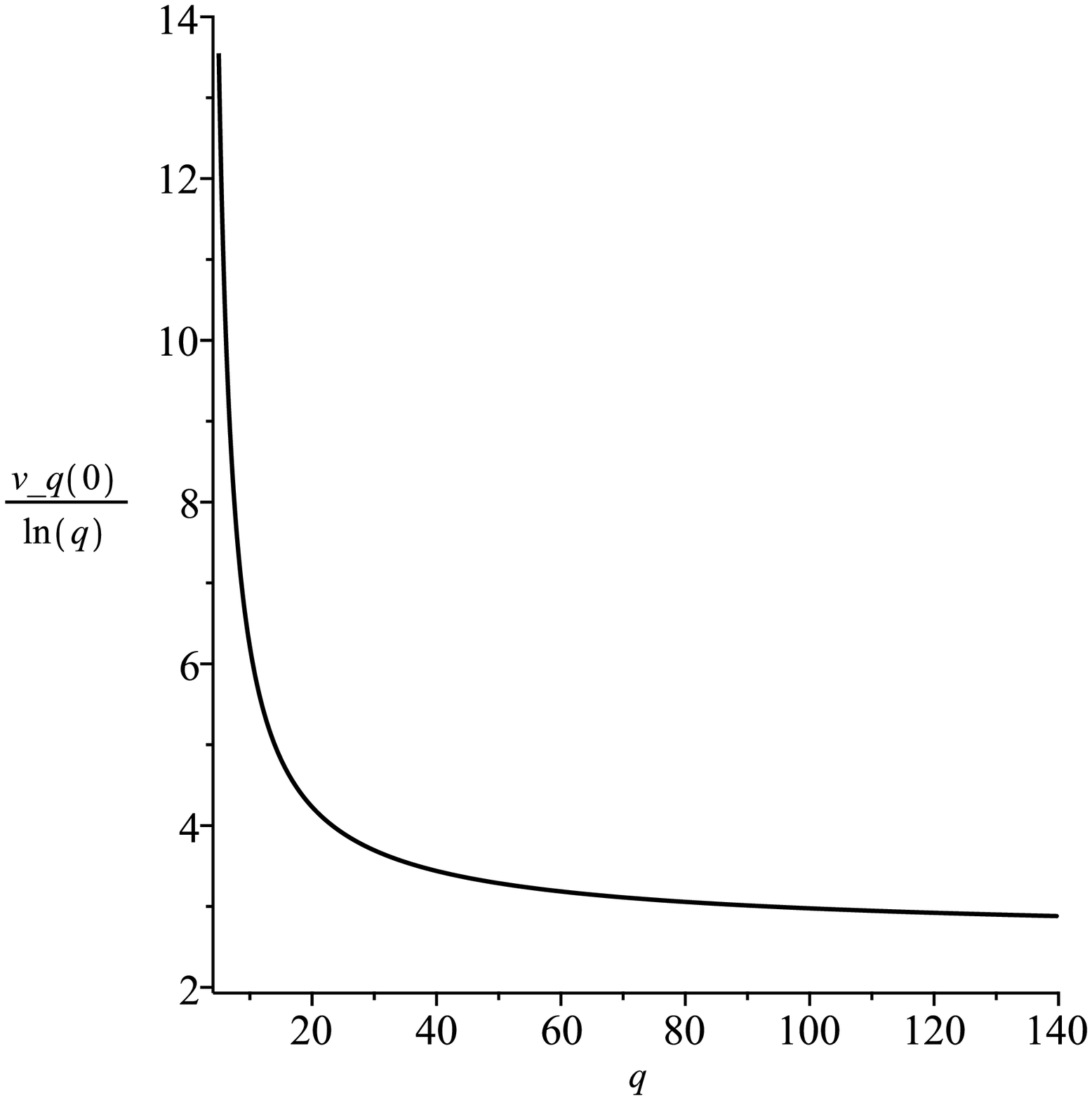}
\caption{$\frac{\max(v_q)}{\log(q)}$ vs. $q$}
\label{fig:subim3}
\end{subfigure}
\begin{subfigure}{0.45\textwidth}
\includegraphics[width=0.9\linewidth, height=6cm]{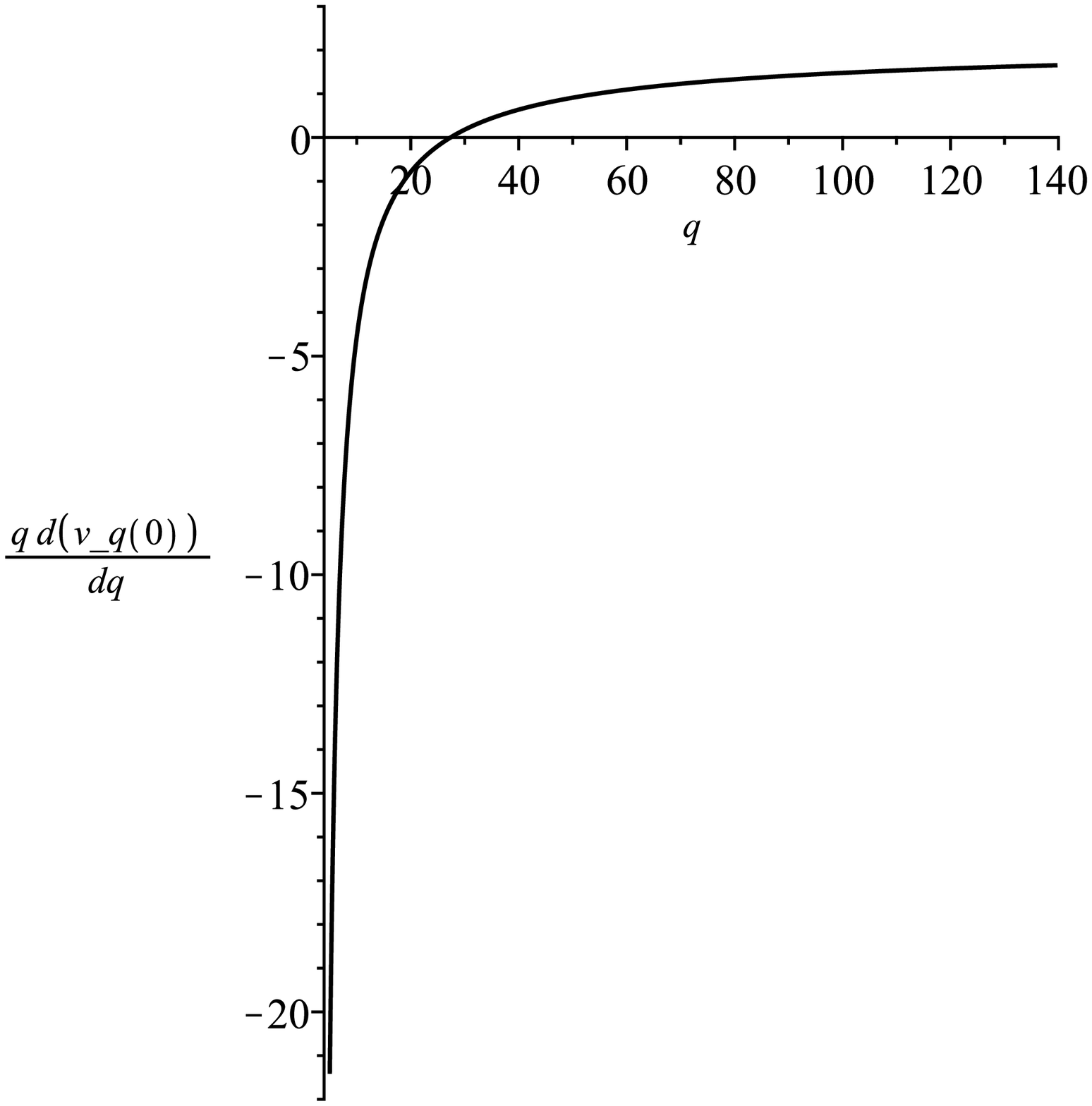}
\caption{$q\displaystyle\frac{\partial(\max(v_q))}{\partial q}$ vs. $q$}
\label{fig:subim4}
\end{subfigure}

\caption{Logarithmic asymptotics of the maximum of the $v$-component}
\label{fig:image2}
\end{figure}

\bibliography{LESys2D_Biblio}
\end{document}